\theoremstyle{plain}
\newtheorem{theorem}{Theorem}[section]
\newtheorem{lemma}[theorem]{Lemma}
\newtheorem{prop}[theorem]{Proposition}
\theoremstyle{remark}
{

\newtheorem{rem}[theorem]{Remark}

}
\newcommand{\eps}{\epsilon}
\newcommand{\ma}{\alpha}
\newcommand{\mb}{\beta}
\newcommand{\mg}{\gamma}
\newcommand{\ml}{\lambda}
\newcommand{\ms}{\sigma}
\newcommand{\mt}{\theta}
\newcommand{\mO}{\Omega}
\newcommand{\cA}{{\mathcal A}}
\newcommand{\cF}{{\mathcal F}}
\newcommand{\cH}{{\mathcal H}}
\newcommand{\cI}{{\mathcal I}}
\newcommand{\cS}{{\mathcal S}}
\newcommand{\cT}{{\mathcal T}}
\newcommand{\E} {{\mathbb E}}
\renewcommand{\P} {{\mathbb P}}
\newcommand{\R} {{\mathbb R}}
\newcommand{\dt}{\tau}
\newcommand{\dx}{h}
\newcommand{\conv}{\rightarrow}
\newcommand{\atan}{\mbox{atan}}
\newcommand{\disp}{\displaystyle}
\newcommand{\be}{\begin{eqnarray}}
\newcommand{\ee}{\end{eqnarray}}
\newcommand{\beno}{\begin{eqnarray*}}
\newcommand{\eeno}{\end{eqnarray*}}
\newcommand{\<}{\langle}
\renewcommand{\>}{\rangle}
\newcommand{\barr}[2]{\begin{array}{#1} #2}
\newcommand{\earr}{\end{array}}
\DeclareMathOperator{\tridiag}{tridiag}
\DeclareMathOperator{\diag}{diag}
\newcommand{\xmin}{X_{min}}
\newcommand{\xmax}{X_{max}}
\newcommand{\oldbcoeff}{{\delta}}
\newcommand{\linconnu}{{y}}
\newcommand{\dop}{\mathrm{d}}
\newcommand{\ds}{\dop s}
\newcommand{\du}{\dop u}
\DeclareMathOperator{\pdiag}{pdiag}
\newcommand{\xhat}{\hat{x}}
\begin{document}

\title
[{BDF finite difference schemes for diffusion equations with obstacle}]
{{Backward Differentiation Formula finite difference schemes for diffusion equations with an obstacle term}}

\author
[O.\ Bokanowski, K.\ Debrabant]
{
Olivier Bokanowski\address{
Laboratoire Jacques-Louis Lions,
Universit{\'e} de Paris (Paris Diderot)
Paris, France
and
Ensta ParisTech
(\textit{olivier.bokanowski@math.univ-paris-diderot.fr})}
\and
Kristian Debrabant\address{
University of Southern Denmark,
Department of Mathematics and Computer Science, Odense
(\textit{debrabant@imada.sdu.dk})}
}

\begin{abstract}
Finite difference schemes, using Backward Differentiation Formula (BDF),
are studied for the approximation of one-dimensional diffusion equations with an obstacle term, of the form
$$\min(v_t - a(t,x) v_{xx} + b(t,x) v_x + r(t,x) v, v- \varphi(t,x))= f(t,x).$$
{For the scheme building on {the second order BDF formula} {(BDF2)},
we discuss unconditional stability, prove an $L^2$-error estimate
and show numerically second order convergence, in both space and time,
unconditionally on the ratio of the mesh steps.
In the analysis, an equivalence of the obstacle equation with a Hamilton-Jacobi-Bellman equation
is mentioned, and a Crank-Nicolson scheme is tested in this context.
Two academic problems for parabolic equations with an obstacle term
with explicit solutions and the American option problem in mathematical
finance are used for numerical tests.}
\end {abstract}

\maketitle

\noindent
{\bf Keywords}:
diffusion equation, obstacle equation, viscosity solution, numerical methods, finite difference scheme,
Crank Nicolson scheme, Backward Differentiation Formula, high order schemes.

\section{Introduction}
We consider a second order partial differential equation with an obstacle term, of the following form:
\begin{subequations}\label{eq:1}
\be 
  & & \min (v_t + \cA v, v -\varphi(t,x))=f(t,x), \quad t\in(0,T), \quad x \in \mO, \label{eq:1a}\\
  & & v(0,x)=v_0(x), \quad x\in \mO, \label{eq:1b}
\ee
\end{subequations}
with
\be \label{eq:Aopp-gen}
  \cA v:= -\frac{1}{2} \ms^2(t,x) v_{xx} + b(t,x) v_x + r(t,x) v.
\ee
{We will assume that $b$, $r$, $\ms$, $f$, $\varphi$ and $v_0$ are Lipschitz continuous functions with respect to all variables,
and also $v_0(x)\geq \varphi(0,x)+f(0,x)$ for compatibility reasons with \eqref{eq:1a},
and $\mO$ is a subset of $\R$.}

{When $\mO=\R$, the solution $v$ can be defined as the unique uniformly continuous viscosity solution of \eqref{eq:1}
in the viscosity sense (see \cite{Pham_1998} for a precise statement, for the case of $x$-dependent obstacle functions and $f\equiv0$, using even less restrictive
assumptions on the remaining data,
and these results can easily be generalized to the case of $(t,x)$-dependent obstacle functions and $f\not\equiv0)$.
The PDE \eqref{eq:1} can also be considered on a bounded domain $\mO=(\xmin,\xmax)$ with Dirichlet boundary conditions,
see \cite{Lions_1983a,Lions_1983b,CraIshLio92} and Section~\ref{sec:CN-FD-schemes}.
For the well-posedness of \eqref{eq:1}, a variational framework can also be used~\cite{friedman-88,ach-pir-2005}.
}

In the recent years there has been a lot of interest in the approximation of such obstacle problems.
Related to {stochastic optimal} stopping time problems, we will consider in particular
\begin{subequations} \label{eq:amer-pb}
\begin{gather}
  \min(v_t- \frac12\ml^2 x^2 v_{xx} - r x v_x + r v,\ v-\varphi(x))=0,\quad t\in(0,T),\ x\in\mO,\\
  v(0,x)=\varphi(x),\quad x\in \mO,
\end{gather}
\end{subequations}
with $\mO=(0,\infty)$,
with constant coefficients $\lambda>0$, $r>0$, $f=0$,
and with initial data identical to the obstacle function.
The American put option problem in mathematical finance corresponds in particular to the case of the initial data (or \enquote{payoff} function)
$ \varphi(x):=\max(K-x,0)$.
For this problem it is known
that the solution presents a singular point $x_s(t)$
moving with time, such that $v(t,x)=\varphi(x)$ for $x\leq x_s(t)$, and $v(t,x)>\varphi(x)$ for $x>x_s(t)$,
and $t\conv x_s(t)$ has a H\"{o}lder continuity behavior near $t=0$
(see Remark~\ref{rem:xs-amer} as well as
\cite{dew-how-rup-wil-1993},  \cite{bar-bur-rom-sam-1993}, and \cite[Chap 6]{ach-pir-2005}).
Some results on the structure of the interface related to \eqref{eq:1} can also be found in \cite{bla-dol-mon-06}
(and see related references).

A finite element scheme in the American option setting has been considered by
Jaillet, Lamberton and Lapeyre in \cite{jai-lam-lap-90}, where its convergence is also proved under conditions on the mesh steps.
For a comprehensive study of finite difference schemes as well as finite element schemes in this context we refer
to Achdou and Pironneau~\cite{ach-pir-2005}.
In relation with the obstacle problem, a finite volume method is also studied in Berton and Eymard \cite{ber-eym-2006}.
In connection with the present work,
{Windcliff, Forsyth, and Vetzal applied in \cite{windcliff01soa} a second order backward differentiation formula (BDF2) scheme to shout options, which can be understood as a sequence of interdependent American option type problems. Also} Oosterlee \cite{oos-2003} applied BDF2 in the context of the American option problem, in combination with a multigrid approach (see also {Oosterlee} et al.\ \cite{oos-gas-fri-2003}).
{Le Floc'h \cite{lefloch14tbf} applied the trapezoidal rule combined with BDF2 as a one-step method (TR-BDF2) to the American option problem.}

{In relation with viscosity theory,
a precise error analysis is given in \cite{jak-2003} for {monotone} finite difference schemes and semi-Lagrangian schemes.}

Now we remark that in the case of $v_0\equiv \varphi{+f}$ (with $\mO=\R$) {in \eqref{eq:1b}}, and for an operator of the form
\be \label{eq:Aopp}
  \cA v:= -\frac{1}{2} \ms^2(x) v_{xx} + b(x) v_x + r(x) v,
\ee
i.\,e., with coefficients and source term $f=f(x)$ which do not depend on time {(and which are otherwise Lipschitz continuous)}
the solution $v$ of \eqref{eq:1} is also the unique viscosity solution of the following Hamilton-Jacobi-{Bellman} (HJ{B})
equation:
\begin{subequations}\label{eq:2}
\be 
  & & v_t + \min(0,\cA v)=f(x), \quad t\in(0,T), \quad x \in \mO,\\
  & & v(0,x)=\varphi(x)+f(x), \quad x\in \mO.
\ee
\end{subequations}
{(For the well-posedness of \eqref{eq:2} in the viscosity framework, see~\cite{Pham_1998}.)}
The equivalence between \eqref{eq:1} and \eqref{eq:2} was signaled to us by R.~Eymard.
It is proved {in  Martini}~\cite{martini:inria-00072718}.
A sketch of the proof of independent interest is given
in Appendix~\ref{app:proof-of-HJB} (see Remark~\ref{rem:equivalence-with-nonzero-f}).

{
In this article,
we first study in Section \ref{sec:CN-FD-schemes}
two elementary Crank-Nicolson (CN) schemes: a classical CN scheme adapted to the obstacle problem \eqref{eq:1}, and an other CN scheme
adapted to the PDE~\eqref{eq:2}.
Although these CN schemes are both second order consistent (and their results even agree in certain cases),
we numerically observe that they tend to switch back to first order convergence for bad ratios of the mesh parameters
(corresponding to large time steps or \enquote{high CFL numbers}).
}
{It is known that a change of variable in time, as in \cite{Reisinger_Whitley_2014},
or the use of refined time steps near singularity (i.\,e.\ near $t=0$), as in \cite{Forsyth_Vetzal_2002},
can be used as a remedy to recover second order convergence. However these remedies somehow correspond to using smaller time steps,
which one may want to avoid.}
Stability results exist for the {CN} scheme, in the $L^\infty$ setting,
for the approximation of the linear heat equation \cite{serdjukova67uso}.
However, to the best of our knowledge, there is no convergence proof for the {CN} finite difference schemes
adapted to the {obstacle} equation \eqref{eq:1}
{without assuming a CFL condition of the form $\dt/h^2$ small enough (where $\tau$ is the time step and $h$ a mesh step).}

To circumvent some of these problems,
we then consider the use of
the Backward {Differentiation} Formula (BDF) for the approximation of the time derivative $v_t$, adapted to the obstacle problem.
In Section {\ref{sec:BDFimp}}, we introduce implicit BDF schemes in the same way as {Windcliff, Forsyth, and Vetzal \cite{windcliff01soa} and} Oosterlee \cite{oos-2003}.
In particular second- and third-order BDF schemes are considered. 
When formulated on {an obstacle problem} \eqref{eq:1}, these schemes are non-linear and implicit,
but they can be solved by using a simple Newton-type algorithm.
{In Section \ref{sec:convEstimates}}, a new unconditional $L^2$ stability estimate is obtained in the case of the second order BDF obstacle scheme (BDF2).
This is {achieved} by using estimates similar to the
\enquote{Gear} scheme for {parabolic PDEs}
(see for instance \cite{ach-pir-2005}, {see also \cite{crouzeix84and}})
with some new ingredients in order to deal with the non-linearity coming from the obstacle term.
We then obtain also a new error estimate in an $L^2$ norm.
This estimate holds under some specific assumptions on the regularity of
the exact solution $v$ which allows $v_{xx}$ bounded but possibly discontinuous at some finite number of singular points
$(t,y_j(t))_{1\leq j\leq p}$ that do not evolve too rapidly.

{In Section \ref{sec:numericalresults}}, two academic models are introduced, with explicit solutions,
{one of them being} very close to the American option model.
These models allow us to study precisely and more easily the  numerical convergence
and allow for a slightly smoother behavior of the interface (compared to the American option model).
This allows also to observe third order behavior for a {third order BDF obstacle scheme} {(}BDF3{)}
on a specific model with bounded $u_{xxx}$ derivative at the free boundary. The MATLAB source code for all numerical experiments in this manuscript can be found at \cite{BokaDebra2020}.

{Appendix \ref{app:proof-of-HJB} is devoted to a sketch of the proof for
the equivalence between PDE \eqref{eq:1} and PDE \eqref{eq:2} in case
the coefficients are not time dependent.}

Our study concerns here only one-dimensional obstacle problems, but
the proposed schemes based on BDF approximations can be extended to higher dimensions
(see \cite{oos-2003,Bokanowski_Picarelli_Reisinger_2018}).

\medskip

{
\paragraph{\bf Acknowledgements.} We are very grateful for the many helpful comments of the referees,
especially concerning the analysis and the useful references related to the CN scheme, and for remarks that
helped us improve the convergence result for the BDF2 scheme.
}%

\section{{CN} finite difference schemes revisited}\label{sec:CN-FD-schemes}

In this section we revisit the {CN} schemes and related approaches for a diffusion equation in presence of an
obstacle term.
{
Although the presented schemes are all theoretically second order consistent in smooth regions (in a sense that is made precise
in Lemma~\ref{lem:2.3}),
we will show that
the order may numerically deteriorate and switch back
to first order for \enquote{high CFL} numbers, i.\,e., for large time steps with respect to space steps.
(As mentioned in the introduction, a change of variable in time, as in \cite{Reisinger_Whitley_2014},
or the use of refined time steps near the $t=0$ singularity, as in \cite{Forsyth_Vetzal_2002}, can lead back to second order convergence).
The BDF scheme presented in Section \ref{sec:BDFimp} will not suffer from this problem.
}%

For the numerical approximation of \eqref{eq:1} we will consider $\mO=(\xmin,\xmax)$ together with Dirichlet boundary conditions:
\begin{subequations}\label{eq:dirichlet}
\be
   & & v(t,\xmin)=v_{\ell}(t), \quad  t\in(0,T),\\
   & & v(t,\xmax)=v_{r}(t),\quad t\in (0,T).
\ee
\end{subequations}
We consider a uniform mesh with $J\geq 1$ points inside:
$$
  x_j=X_{min} + j h,\quad j=0,\dots,J+1,
$$
where $h:=\frac{\xmax-\xmin}{J+1}$.
Let $N\geq 1$, $\dt=\frac{T}{N}$ and $t_n = n\dt$.

We shall say that we have a \enquote{high CFL number} when $\frac{\tau}{h}\gg 1$ (or $\frac{J}{N}\gg 1$),
compared to a situation where $\frac{\tau}{h}\simeq 1$ (or $\frac{J}{N}\simeq 1$).

Denoting ${v^n_j}{:}=v(t_n,x_j)$,
we consider {the following} centered finite difference approximation for the operator $\cA v$:
\be\label{eq:A}
  (\cA v)(t_n,x_j) & \simeq  & \\
   & & \hspace{-2cm}
    \frac{1}{2} \sigma^2(t_n,x_j)
    \bigg(\frac{-{v}^n_{j-1}+2 {v}^n_j - {v}^n_{j+1}}{h^2}\bigg) + b(t_n,x_j)\frac{{v}^{n}_{j+1}-{v}^n_{j-1}}{2h} + r(t_n,x_j) {v}^n_j.
    \nonumber
\ee

The diffusion part will always dominate the advection part ($\frac{1}{2}\frac{\ms^2}{h^2}\geq \frac{|b|}{2h}$)
to avoid stability issues with the centered approximation.
Note that for the American put option problem this requires $h\leq x_{1} {\lambda}^2/r$.

{
\begin{rem}
4th order finite difference approximations for $v_x$ and $v_{xx}$ can also be used instead of \eqref{eq:A}, in particular for the numerical tests, as detailed in Section~\ref{sec:5.2}.
\end{rem}
}

Let us denote by {$A^{(n)} u^n + q^n$} the approximation of $\cA v(t_n,\cdot)$
on a given set of grid points, with $u^n=(u^n_1,\dots, u^n_J)^T$, {where $u^n_j$ are approximations of ${v^n_j}$},
{$A^{(n)}=(a^{(n)}_{i,j})_{1\leq i,j\leq J}$ with
\begin{subequations}\label{eq:A2ndorder}
\be
  a^{(n)}_{i,i-1} & := & - \mb^n_i -\mg^n_i,       \quad i=2,\dots,J, \\
  a^{(n)}_{i,i}   & := &   2\mb^n_i  + r(t_n,x_i), \quad i=1,\dots,J,  \\
  a^{(n)}_{i,i+1} & := & - \mb^n_i +\mg^n_i,       \quad i=1,\dots,J-1,
\ee
\end{subequations}
where $\mb^n_i:=\frac1{2h^2}\sigma^2(t_n,x_i)$ and $\mg^n_i:=\frac{b(t_n,x_i)}{2h}$,
and
\beno
  q^n:=\big((-\mb^n_1-\mg^n_1) u_0^n,
      0, \dots, 0, (-\mb^n_J+\mg^n_J) u_{J+1}^n\big)^\top,
\eeno
}
and with given Dirichlet boundary conditions,
for $n=0,\dots,N$:
\be \label{eq:bc}
  u^n_0=v_\ell(t_n)\quad \mbox{and}\quad  u^n_{J+1}=v_r(t_n).
\ee
{
The matrix $A^{(n)}$ is in general time-dependent, but
for simplicity of presentation we will write $A^{(n)}\equiv A \equiv (a_{i,j})$ without explicit time-dependency.}
The vector $q^n$ may depend on the time also because of the time dependency in the boundary conditions~\eqref{eq:bc}.
We have second order consistency in space, that is, {assuming} $v$ sufficiently regular,
$$
  (A v^n + q^n)_j =  (\cA v)(t_n,x_j) + O(\dx^2).
$$

A first simple {CN} scheme for the obstacle equation \eqref{eq:1}
is, for $n=0,\dots,N-1$ and  $j=1,\dots,J$, given by
\begin{multline}\label{eq:CNscheme-1}
  \cS^{1,n}_j{(u)}
  :=
  \min\bigg(
    \frac{u^{n+1}_j - u^n_j}{\dt}  + \frac{1}{2} ( A u^{n+1} + A u^n)_j + q^{n+1/2}_j {-f^{n+1/2}_j} ,\\
     u^{n+1}_j {-{\varphi^{n+1}_j} -f^{n+1}_j}
    \bigg) = 0,
\end{multline}
where we have denoted
$$
  {
  {\varphi^{n+1}_j}:=\varphi(t_{n+1},x_j)\quad \mbox{and}\quad
  f^{p}_j:=f(t_p,x_j),\ p\in\{n+\frac 12,n+1\}
  }
$$
{and use} the boundary conditions \eqref{eq:bc} and {initial condition}
\be\label{eq:ic}
  u^0_j:=v_0(x_j), \quad 1\leq j\leq J.
\ee

Looking now at equation \eqref{eq:2},
an other possible CN scheme is
\begin{multline}\label{eq:CNscheme-2}
  \cS^{2,n}_j{(u)}  :=
    \min\bigg(
    \frac{u^{n+1}_j - u^n_j}{\dt}  + \frac{1}{2} ( A u^{n+1} + A u^n)_j + q^{n+1/2}_j {- f^{n+1/2}_j},\\
     \frac{u^{n+1}_j - u^n_j}{\dt} {- f^{n+1/2}_j}
    \bigg)  = 0,\qquad 1\leq j\leq J,
\end{multline}
initialized with $u^0_j:=v_0(x_j)$.
Because $\dt>0$, this scheme is also equivalent to
\begin{multline} \label{eq:CNscheme-2bis}
  \min\bigg(
    \frac{u^{n+1}_j - u^n_j}{\dt}  + \frac{1}{2} ( A u^{n+1} + A u^n)_j + q^{n+1/2}_j {- f^{n+1/2}_j},\\
     u^{n+1}_j - u^n_j {- \dt f^{n+1/2}_j}
    \bigg) = 0,
  \qquad1\leq j\leq J.
\end{multline}

\begin{rem}
For both schemes, the unknown $u^{n+1}$ is {unique, well defined,} and can be obtained by using fix point methods.
{One can use the fact that} there exists a unique solution
of the obstacle problem $\min(Bx-\oldbcoeff,x-g)=0$ as soon as, for instance, $B$ is {strictly} diagonally dominant with $B_{ii}>0$
(see \cite{ach-pir-2005}).
\end{rem}

\begin{rem}\label{eq:newton}
If furthermore $B$ is a {strictly diagonally dominant $M$-matrix}
(i.\,e.\ $B_{ij}\leq 0$ for all $i\neq j$ and $B_{ii}>\sum_{j\neq i} |B_{ij}|$ for all $i$)
then a Newton-like algorithm \cite{bok-mar-zid-2009} can be implemented, which is particularly
efficient for solving obstacle problems exactly (up to machine precision) in a few number of iterations. We refer also to \cite{hin-ito-kun-2003}
for convergence of semi-smooth Newton methods or related algorithms applied to solve discretized PDE obstacle problems.
{Note that the number of iterations before convergence may increase linearly with the number of mesh points \cite{bok-mar-zid-2009}.
Penalization methods for solving the obstacle problem
can also be used in order to approximate the equation with a controlled penalization error and then
significantly reduce the number of needed iterations~\cite{Forsyth_Vetzal_2002,Witte_Reisinger_2012,Reisinger_Zhang_Preprint2018}.}
\end{rem}

{
For the linear part $v_t+\cA v$, the CN scheme appears to be second order consistent only precisely at time $t_{n+1/2}$,
while the obstacle term $v^{n+1}_j-{\varphi^{n+1}_j}$ is evaluated at time $t_{n+1}$ in \eqref{eq:CNscheme-1}, so appears to be only first order consistent with
the value $v(t_{n+1/2},x_j)-\varphi(t_{n+1/2},x_j)$. Hence the consistency error seems to be in general of order $O(\tau) + O(\dx^2)$ but not better.
The following lemma shows that a better order may hold.}
\begin{lemma}\label{lem:2.3}
{
$(i)$
The CN scheme \eqref{eq:CNscheme-1} is second order consistent in time and space, in the following sense:
for any regular $v$ that is solution to \eqref{eq:1a} it holds
$$ {\cS}^{1,n}_j(v)= O(\dt^2 + h^2).$$
$(ii)$ The CN scheme \eqref{eq:CNscheme-2} is second order consistent in time and space, in the following sense:
for any regular $v$ it holds}
\[{ {\cS}^{2,n}_j(v)= \min(v_t + \cA v-f, v_t-f)(t_n,x_j)  + O(\dt^2+ h^2).}\]
\end{lemma}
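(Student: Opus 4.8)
The plan is to Taylor-expand every term of the two schemes about the midpoint $t_{n+1/2}$ and to exploit the sign structure forced by the obstacle equation. Throughout I would write $P(t,x):=v_t+\cA v-f$ and $Q(t,x):=v-\varphi-f$, so that \eqref{eq:1a} reads $\min(P,Q)=0$ and, inserting $v$ into the schemes, $S^{1,n}_j(v)=\min\big(L^n_j,\,Q(t_{n+1},x_j)\big)$ and $S^{2,n}_j(v)=\min\big(L^n_j,\,D^n_j\big)$, with common linear part $L^n_j:=\frac{v^{n+1}_j-v^n_j}{\dt}+\frac12(Av^{n+1}+Av^n)_j+q^{n+1/2}_j-f^{n+1/2}_j$ and $D^n_j:=\frac{v^{n+1}_j-v^n_j}{\dt}-f^{n+1/2}_j$. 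First I would record the classical Crank--Nicolson estimate for the linear part: expanding the centered difference and the symmetric average about $t_{n+1/2}$, and invoking the second order spatial consistency of $A$ stated just before the lemma, gives $L^n_j=P(t_{n+1/2},x_j)+O(\dt^2+h^2)$.

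For $(i)$ the only nonstandard term is the obstacle constraint, evaluated at $t_{n+1}$ rather than $t_{n+1/2}$, which a priori contributes only $O(\dt)$. Since $v$ solves \eqref{eq:1a}, $\min(P,Q)\equiv0$, hence $P\ge0$ and $Q\ge0$ throughout. The lower bound is then immediate: $L^n_j\ge-O(\dt^2+h^2)$ (because $P\ge0$) and $Q(t_{n+1},x_j)\ge0$, so $S^{1,n}_j(v)\ge-O(\dt^2+h^2)$. For the upper bound I would split at $(t_{n+1/2},x_j)$. If $P(t_{n+1/2},x_j)=0$, then $L^n_j=O(\dt^2+h^2)$ and the minimum is at most this. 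If instead $P(t_{n+1/2},x_j)>0$, then $Q(t_{n+1/2},x_j)=0$; since $Q\ge0$ everywhere, this is a global minimum of $Q$, so $\partial_t Q(t_{n+1/2},x_j)=0$, and a second order Taylor expansion in time yields $Q(t_{n+1},x_j)=Q(t_{n+1/2},x_j)+\tfrac{\dt}{2}\partial_t Q(t_{n+1/2},x_j)+O(\dt^2)=O(\dt^2)$, so the minimum is again $O(\dt^2)$. Combining the two bounds gives $S^{1,n}_j(v)=O(\dt^2+h^2)$.

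Part $(ii)$ requires no use of the PDE. Here the second argument is a genuine centered difference about the midpoint, so $D^n_j=(v_t-f)(t_{n+1/2},x_j)+O(\dt^2)$. Because $(\alpha,\beta)\mapsto\min(\alpha,\beta)$ is $1$-Lipschitz, replacing each argument by its leading value up to an $O(\dt^2+h^2)$ error perturbs the minimum by at most the same order, so $S^{2,n}_j(v)=\min\big(v_t+\cA v-f,\,v_t-f\big)(t_{n+1/2},x_j)+O(\dt^2+h^2)$, which is the claimed identity with the midpoint $t_{n+1/2}$ as the natural reference time.

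The delicate step is the upper bound in $(i)$: the obstacle term is frozen at $t_{n+1}$ while the linear part is centered at $t_{n+1/2}$, and this time mismatch is exactly what makes the scheme look only first order near the contact set. The observation that restores second order is that, wherever the obstacle is active, $Q$ attains its global minimum value $0$, so its time derivative vanishes there and its value at the neighboring level is quadratically small. I would check that this argument needs only $Q\in C^1$ with bounded (possibly discontinuous) second time derivative, matching the regularity of $v$ assumed elsewhere in the paper; it is also worth stressing that this pointwise consistency does not contradict the numerically observed first order convergence, which originates in stability and error accumulation rather than in local truncation.
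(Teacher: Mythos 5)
Your proof is correct and follows essentially the same route as the paper: the midpoint consistency of the Crank--Nicolson linear part, the sign information $v_t+\cA v-f\geq 0$ and $v-\varphi-f\geq 0$ inherited from the PDE, and, at contact points, the vanishing of the time derivative of $t\mapsto (v-\varphi-f)(t,x_j)$ at its global minimum, which upgrades the obstacle term to $O(\dt^2)$. Your reorganization into a lower bound plus a two-case upper bound (the paper uses three cases on the obstacle values at $t_{n+1}$ and $t_{n+1/2}$) and your reading of part $(ii)$ with the minimum evaluated at $t_{n+1/2}$ --- which is what the paper's own proof actually delivers --- are only cosmetic differences.
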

\begin{proof}
$(i)$
{$v$ regular implies}
\begin{multline}\label{eq:consistencylinearpart}
 \frac{v^{n+1}_j - v^n_j}{\dt}   + \frac{1}{2} ( A v^{n+1} + A v^n)_j + q^{n+1/2}_j\\
 = (v_t + \cA v)(t_{n+1/2},x_j)   + O(\dt^2 \| v_{3t}\|_\infty) + O(\dx^2 (\| v_{3x}\|_\infty + \| v_{4x}\|_\infty)).
\end{multline}

{
Inserting \eqref{eq:consistencylinearpart} into \eqref{eq:CNscheme-1} we obtain the estimate
\begin{multline}
  {\cS}^{1,n}_j(v)=\min( (v_t + \cA v)(t_{n+1/2},x_j) - f^{n+1/2}_j,\\ v(t_{n+1},x_j)-{\varphi^{n+1}_j}-f^{n+1}_j) + O(\dt^2+ \dx^2).
  \label{eq:1n12}
\end{multline}

Now we consider three possible cases.
First case: $v(t_{n+1},x_j)={\varphi^{n+1}_j}+f^{n+1}_j$.
Since $v_t + \cA v - f \geq 0$ by \eqref{eq:1}, 
it follows ${\cS}^{1,n}_j(v)= O(\dt^2+ \dx^2)$.

Second case: $v(t_{n+1},x_j)>{\varphi^{n+1}_j}+f^{n+1}_j$ and $v(t_{n+1/2},x_j)>\varphi^{n+1/2}_j+f^{n+1/2}_j$.
In that case, since $v$ is solution to \eqref{eq:1} at $(t_{n+1/2},x_j)$,
it holds \[(v_t + \cA v - f)(t_{n+1/2},x_j)=0,\] using \eqref{eq:1n12} yields therefore
\beno
  {\cS}^{1,n}_j(v)
    & = & \min(0 ,\ v(t_{n+1},x_j)-{\varphi^{n+1}_j}-f^{n+1}_j) + O(\dt^2+ \dx^2)   \\
    & = & O(\dt^2+ \dx^2).
\eeno

Third case: $v(t_{n+1},x_j)>{\varphi^{n+1}_j}+f^{n+1}_j$ and $v(t_{n+1/2},x_j)=\varphi^{n+1/2}_j+f^{n+1/2}_j$.
Since $v(t,x_j)\geq \varphi(t,x_j) + f(t,x_j)$ for all $t$, $t\conv v(t,x_j)-\varphi(t,x_j)-f(t,x_j)$ reaches a minimum at $t=t_{n+1/2}$,
and by using the regularity of $v$, $f$ and $\varphi$  we obtain $v_t(t_{n+1/2},x_j)-\varphi_t(t_{n+1/2},x_j)-f_t(t_{n+1/2},x_j)=0$.
Then $v(t_{n+1},x_j)-\varphi(t_{n+1},x_j)-f(t_{n+1},x_j)=v(t_{n+1/2},x_j) -\varphi(t_{n+1/2},x_j)-f(t_{n+1/2},x_j) + O(\tau^2)= O(\tau^2)$,
from which we deduce
$$
  {\cS}^{1,n}_j(v)=\min( (v_t + \cA v - f)(t_{n+1/2},x_j),\ 0) + O(\dt^2+ \dx^2) \ = O(\dt^2+ \dx^2).
$$
}
$(ii)$
Inserting \eqref{eq:consistencylinearpart} and
\[\frac{v^{n+1}_j- v^n_j}{\dt} = v_t (t_{n+1/2},x_j) + O(\dt^2 \| v_{3t}\|_\infty)\]
into \eqref{eq:CNscheme-2} we obtain the second order estimate
\[
  {\cS}^{2,n}_j(v)=\min(v_t + \cA v - f, v_t - f)(t_{n+1/2},x_j) + O(\dt^2+ \dx^2).
\]%
\end{proof}
{%
\begin{rem}
In the proof of $(i)$ we use the fact that $v$ is solution of the PDE, as well as
the regularity of $t\conv v(t,x_j)$ in a region where it switches from $v(t_n,x_j)=\varphi^n_j$ to $v(t_n,x_j)>\varphi^n_j$,
and we know in general that this corresponds to a jump of $v_t$ (or $v_{xx}$). Therefore this
analysis cannot be applied in general.
If $v$ is regular, without assuming that $v$ is solution of \eqref{eq:1},
then by \eqref{eq:1n12} we would obtain only a first order estimate in time.
\end{rem}
}

{On the other hand,} the following assertions hold:

\begin{lemma}\label{lem:2.2}
{Assume that $f$ and $\varphi$ are independent of $t$.}

$(i)$ {If for given $u^n$ value,
the solution $u^{n+1}$ of the scheme \eqref{eq:CNscheme-1} satisfies $u^{n+1}\geq u^n$ (with $u^n\geq \varphi+f$), then
$u^{n+1}$ is also solution of the scheme \eqref{eq:CNscheme-2} starting from $u^{n}$.}

$(ii)$ {In particular, if {$u^1\geq u^0$} and the following conditions hold:}
\begin{itemize}
\item
  the vector $q$ {and the matrix $A$ do} not depend on time,
\item
  the matrix $I-\frac{\dt}{2} A$ is positive componentwise,
\item
  the matrix $I+\frac{\dt}{2}A$ is a {strictly diagonally dominant} $M$-matrix,
\end{itemize}
{then the solution of the scheme \eqref{eq:CNscheme-1} satisfies $u^{n+1}\geq u^n$ for all $n$, and thus by (i) schemes \eqref{eq:CNscheme-1} and \eqref{eq:CNscheme-2} give identical values.}
\end{lemma}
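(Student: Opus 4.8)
The plan is to prove part $(i)$ directly by a case analysis on the obstacle term, and then derive part $(ii)$ by an induction argument combined with a comparison/monotonicity principle for the scheme. Throughout, since $f$ and $\varphi$ are time-independent, I write $\varphi_j$ and $f_j$ without time superscripts, so that the obstacle term in \eqref{eq:CNscheme-1} reads $u^{n+1}_j - \varphi_j - f_j$.

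For part $(i)$, suppose $u^{n+1}$ solves \eqref{eq:CNscheme-1} and satisfies $u^{n+1}\geq u^n \geq \varphi + f$ componentwise. I want to show that at each node $j$ the defining $\min$ of $\cS^{2,n}_j$ vanishes. The first argument of both $\cS^{1,n}_j$ and $\cS^{2,n}_j$ is identical, namely the linear CN expression $\frac{u^{n+1}_j-u^n_j}{\tau} + \frac12(Au^{n+1}+Au^n)_j + q^{n+1/2}_j - f^{n+1/2}_j$; call it $L_j$. What differs is the obstacle argument: \eqref{eq:CNscheme-1} uses $G^1_j := u^{n+1}_j-\varphi_j-f_j$, while \eqref{eq:CNscheme-2} uses $G^2_j := \frac{u^{n+1}_j-u^n_j}{\tau} - f^{n+1/2}_j$. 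I split into the two cases coming from which branch is active in \eqref{eq:CNscheme-1}. If $L_j = 0$ and $G^1_j \geq 0$ at node $j$, then $\min(L_j, G^2_j) = \min(0, G^2_j)$, and since $u^{n+1}_j \geq u^n_j$ gives $\frac{u^{n+1}_j-u^n_j}{\tau}\geq 0$, I expect (after checking the $f$ term, which is harmless when $f$ is suitably controlled or absorbed) that $G^2_j \geq 0$, so the $\min$ is $0$. If instead $G^1_j = 0$ and $L_j \geq 0$, then $u^{n+1}_j = \varphi_j + f_j \leq u^n_j$; combined with $u^{n+1}_j \geq u^n_j$ this forces $u^{n+1}_j = u^n_j$, hence $G^2_j = -f^{n+1/2}_j$ and I must check this equals zero or is nonnegative while $L_j \geq 0$ — this is the delicate bookkeeping point where the sign and placement of the source term $f$ must be tracked carefully, and it is the step I expect to be the main obstacle.

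For part $(ii)$, the strategy is induction on $n$ to establish $u^{n+1}\geq u^n$ for all $n$, with part $(i)$ then giving the equivalence of the two schemes. The base case $u^1\geq u^0$ is assumed. For the inductive step, I rewrite scheme \eqref{eq:CNscheme-1} in matrix form: when the linear branch is active it reads $(I+\frac{\tau}{2}A)u^{n+1} = (I-\frac{\tau}{2}A)u^n + (\text{terms involving } q, f)$, subject to the obstacle constraint $u^{n+1}\geq \varphi+f$. The plan is to exploit the three structural hypotheses: $I+\frac{\tau}{2}A$ being a strictly diagonally dominant $M$-matrix guarantees that its inverse is nonnegative (so the scheme is monotone and the obstacle solution is unique), while $I-\frac{\tau}{2}A$ being nonnegative componentwise ensures the explicit-like propagation preserves the ordering. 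Concretely, I would argue that applying the solution operator of the obstacle problem to the two data $u^n$ and $u^{n-1}$ (with $u^n\geq u^{n-1}$ by induction) preserves the inequality, using that the obstacle map $g\mapsto \text{solution of } \min(Bx-\delta, x-g)=0$ is order-preserving when $B^{-1}\geq 0$; this is the monotonicity of the discrete obstacle operator referenced in the preceding remarks. Chaining these monotonicity facts propagates $u^n\geq u^{n-1}$ to $u^{n+1}\geq u^n$, closing the induction. The main difficulty here is verifying that the obstacle projection step respects monotonicity under exactly these matrix hypotheses, rather than the two branches interacting pathologically; I would isolate this as a short comparison lemma for the discrete obstacle operator and then apply it node-by-node.
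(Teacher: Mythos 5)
Your plan follows the paper's own proof almost line for line: part (i) is the same two-case split on which branch of \eqref{eq:CNscheme-1} is active, and part (ii) is the same induction combined with a comparison principle for the discrete obstacle operator. The issue is the step you yourself flag as \enquote{the main obstacle} in (i): it is a genuine gap, and it cannot be closed for general time-independent $f$ --- the assertion is only true for $f\equiv 0$. In your obstacle-active case ($G^1_j=0$, hence $u^{n+1}_j=u^n_j=\varphi_j+f_j$) you get $G^2_j=-f_j$, and $\min(L_j,-f_j)=0$ does not follow from $L_j\geq 0$ unless $f_j=0$; in your other case ($L_j=0$) you get $G^2_j=\frac{u^{n+1}_j-u^n_j}{\dt}-f_j\geq -f_j$, which is nonnegative when $f_j\leq 0$ but not in general. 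Concretely, take $J=1$, $A=(r)$ with $0<r<1$, $q=0$, $\varphi_1=0$, $f_1=-1$, $u^n_1=-1$: then $u^{n+1}_1=-1$ solves \eqref{eq:CNscheme-1} (its min is $\min(1-r,0)=0$) and satisfies $u^{n+1}\geq u^n\geq \varphi+f$, yet in \eqref{eq:CNscheme-2} the min evaluates to $\min(1-r,1)=1-r>0$, so $u^{n+1}$ does not solve \eqref{eq:CNscheme-2}. (The paper's own proof asserts $\cS^{2,n}_j(u)=0$ at exactly these two points without tracking $f$, so it is open to the same objection; for $f\equiv 0$ --- the American-option case in which the lemma is actually applied --- both cases close immediately, since then $\min(0,G^2_j)=0$ from $G^2_j\geq 0$, and $\min(L_j,0)=0$ from $L_j\geq 0$.) So you should either restrict to $f\equiv 0$ and finish the two cases as just indicated, or accept that the bookkeeping problem you identified is fatal to the statement as written.

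For part (ii) your plan is correct and coincides with the paper's argument, with two small adjustments. First, the monotonicity you need is in the linear data, not in the obstacle: in \eqref{eq:CNscheme-1} the obstacle $\varphi+f$ is fixed, and $u^n$ enters only through $\oldbcoeff_n=(I-\frac{\dt}{2}A)u^n-\dt q-\dt f$, so the hypothesis $I-\frac{\dt}{2}A\geq 0$ gives $\oldbcoeff_n\geq \oldbcoeff_{n-1}$ from $u^n\geq u^{n-1}$. Second, the comparison lemma should be proved from the full $M$-matrix structure (nonpositive off-diagonal entries plus strict diagonal dominance), not merely from $B^{-1}\geq 0$. The paper packages it as inverse monotonicity of $F_n(x):=\min(Bx-\oldbcoeff_n,\,x-\varphi-f)$, namely $F_n(y)\leq F_n(z)\Rightarrow y\leq z$, and then concludes from $F_{n-1}(u^n)=0=F_n(u^{n+1})\leq F_{n-1}(u^{n+1})$ that $u^n\leq u^{n+1}$, which is exactly your chaining argument.
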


\begin{proof}
Proof of $(i)$:
Let $c^{{n}}:=\frac{1}{2} ( A u^{n+1} + A u^n)_j + q^{n+1/2}_j{-f_j}$, so that the first scheme \eqref{eq:CNscheme-1}
reads $\min(\frac{u^{n+1}_j - u^n_j}{\dt}  + c^n_j, \ u^{n+1}_j{-\varphi_j-f_j})=0$.
Assuming that $u^{n+1}$ is solution of scheme \eqref{eq:CNscheme-1}, with
$u^{n+1}\geq u^{n}$, if $u^{n+1}_j={\varphi_j+f_j}$, then it is clear that also $u^n_j={\varphi_j+f_j}=u^{n+1}_j$ and therefore since
 $\frac{u^{n+1}_j - u^n_j}{\dt}  + c^n_j\geq 0$
it can be deduced that ${\cS}^{2,n}_j(u)=0$. On the other hand, if $u^{n+1}_j>{\varphi_j+f_j}$, then
it implies $\frac{u^{n+1}_j - u^n_j}{\dt}  + c^n_j=0$, from which we conclude $\cS^{2,n}_j(u)=0$, so $u^{n+1}$
is also a solution of the scheme \eqref{eq:CNscheme-2}.

Proof of $(ii)$: {
Denoting $F_n(x):=\min(B x - \oldbcoeff_n, x-{\varphi-f})$ where $B=I+\frac{\dt}{2} A$
and $\oldbcoeff_n = (I-\frac{\dt}{2}A) u^n - \dt q {-\dt f}$, the scheme \eqref{eq:CNscheme-1} is equivalent to $F_n(u^{n+1})=0$. Because $B$ is an $M$-matrix,
the function $F_n$ is monotone in the sense that $F_n(y)\leq F_n(z)$ $\Rightarrow$ $y \leq z$ (componentwise).

Assume now that $u^{n}\geq u^{n-1}$ for some $n\geq 1$. Due to $I-\frac{\dt}{2}A\geq 0$, it holds $\oldbcoeff_n\geq \oldbcoeff_{n-1}$, and therefore
$F_n(y)\leq F_{n-1}(y)$ for all $y$. In particular, $F_{n-1}(u^{n})=0=F_n(u^{n+1}) \leq F_{n-1}(u^{n+1})$, and by the monotonicity
of $F_{n-1}$ we conclude $u_{n}\leq u_{n+1}$. By induction it follows that $u^{n+1}\geq u^n$ for all $n$.}
\end{proof}
{
\begin{rem}\label{rem:Lem2.2forAmericanPutoptionproblem}
For the American put option problem \eqref{eq:amer-pb} with $\varphi(x):=\max(K-x,0)$ the left boundary condition will be $u^n_0=K-X_{min}$,
and the right boundary condition $ u^n_{J+1}=0$. Thus the vector $q^{n+1/2}=q$ does not depend on time.
Further, for $A$ determined by \eqref{eq:A} {and $\lambda^2(X_{\min}+\dx)>r\dx$}, the matrix $I-\frac{\dt}{2} A$ is positive componentwise under the CFL condition $\left(\frac{{\lambda^2X_{\max}^2}}{h^2}+r\right)\frac{\dt}{2}\leq 1$,
and the matrix $I+\frac{\dt}{2}A$ is a strictly diagonally dominant $M$-matrix {under the condition $\frac{\dt}{\dx}<\frac{2+\dt r}{rX_{\max}}$}.
Finally, with $u^0=g$, it is easy to see that the scheme \eqref{eq:CNscheme-1} satisfies $u^1\geq u^0=g$.
Thus, by Lemma \ref{lem:2.2}, scheme \eqref{eq:CNscheme-1} and scheme \eqref{eq:CNscheme-2} give identical values.
\end{rem}
}

This explains why {the CN scheme \eqref{eq:CNscheme-1} gives the same results as scheme \eqref{eq:CNscheme-2} for low CFL number}.

In order to verify the expected orders,
we have tested the CN schemes numerically on the American put option model
with initial data
\begin{subequations}
\label{eq:parameters}
\be
  \varphi(x):=\max(K-x,0)
\ee
and parameters
\be
  \lambda=0.2, \quad r=0.1, \quad T=1, \quad K=100.
\ee
\end{subequations}

In this setting, we observe that the singular point $x_s(T)$ is greater than $80$, so
for the numerical approximation we have considered the subdomain $\mO=(\xmin,\xmax)\equiv(75,275)$
and boundary conditions of Dirichlet type:
$$
  v(t,X_{\min})=K-X_{\min}, \quad 0<t<T,
$$
and
$$
  v(t,X_{\max})=0, \quad 0<t<T.
$$
We have numerically estimated that the truncation error from the right, using $X_{max}=275$ instead of $X_{max}=+\infty$,
is less than $10^{-8}$.

{The errors {of the CN schemes} in $L^2$, $L^1$ and $L^\infty$ norms are computed at time $t_N=T$ as follows:
\be\label{discreteLperrordef}
  e_{L^p}:=(h\sum_{{j=1}}^{{J}} |u^N_{{j}}-v^N_{{j}}|^p)^{1/p}
  \mbox{ and }e_{L^\infty}{:}=\max_{{{j=1}}}^{{{J}}} |u^N_{{j}}-v^N_{{j}}|.
\ee
}%
{The} reference values are computed using a BDF obstacle scheme of {second} order and with {$N=J{+1}=20480$}
that will be made precise in Section~\ref{sec:BDFimp}. {In the Newton-like algorithm to solve the obstacle problem $\min(Bx-\oldbcoeff,x-g)=0$, we iterated until the numerical approximation $\xhat$ fulfilled $\|\min(B\xhat-\oldbcoeff,\xhat-g)\|_{\infty}<10^{-10}$.}

Results are given in Table~\ref{tab:CN_highCFL} with discretization parameters $N=J{+1}$ and $N={(}J{+1)}/10$
(this second case corresponds to large time steps or \enquote{high CFL numbers}).
Note that the quite restrictive CFL condition in part (ii) of Lemma \ref{lem:2.2} {(cmp.\ Remark \ref{rem:Lem2.2forAmericanPutoptionproblem})}
is not fulfilled.

However, for lower $N$ values (higher CFL numbers) the CN scheme is no more second order and goes back to first order
behavior. This is illustrated in Table~\ref{tab:CN_highCFL}. In particular we observe that the pointwise inequality
$u^{n+1}\geq u^n$ is no more true (due to the fact that
the amplification matrix $(I+\frac{\tau}{2} A)^{-1} (I-\frac{\tau}{2} A)$
does not have only positive coefficients anymore).

Now, taking the obstacle to be $u^n$ instead of {$\varphi$}, hence solving the scheme \eqref{eq:CNscheme-2},
enforces that $u^{n+1}\geq u^n$. Results for the scheme \eqref{eq:CNscheme-2} are similar to that for scheme \eqref{eq:CNscheme-1}
for low CFL numbers (both schemes give identical values for the case $N=J{+1}$, here).
For higher CFL numbers, results obtained with the scheme \eqref{eq:CNscheme-2} {differ (see Table~\ref{tab:CN-2-highCFL}),
but again switch back to first order behavior. They are even less precise than the CN scheme~\eqref{eq:CNscheme-1}
for the $L^1$ and the $L^2$ errors.}

\begin{table}[!hbtp]
\begin{center}
\begin{tabular}{|cc|cc|cc|cc|c|}
\hline
\multicolumn{2}{|c|}{Mesh}
   & \multicolumn{2}{|c|}{Error $L^1$} & \multicolumn{2}{|c|}{Error $L^2$} & \multicolumn{2}{|c|}{Error $L^\infty$}
   &  time(s)
   \\
\hline
  $J{+1}$ & $N$   &  error   &  order &    error & order  & error    & order   &         \\
\hline\hline
    80 &     80 &{7.21e-01} &{ 1.88 }& {1.27e-01}& {1.80 }& {4.49e-02 }& {1.20 }& { 0.01}\\
   160 &    160 &{1.42e-01} &{ 2.35 }& {2.28e-02}& {2.48 }& {5.29e-03 }& {3.09 }& { 0.01}\\
   320 &    320 &{3.79e-02} &{ 1.90 }& {6.04e-03}& {1.92 }& {1.40e-03 }& {1.92 }& { 0.04}\\
   640 &    640 &{1.01e-02} &{ 1.91 }& {1.58e-03}& {1.93 }& {3.57e-04 }& {1.97 }& { 0.12}\\
  1280 &   1280 &{2.79e-03} &{ 1.85 }& {4.29e-04}& {1.88 }& {9.21e-05 }& {1.95 }& { 0.45}\\
  2560 &   2560 &{7.98e-04} &{ 1.80 }& {1.20e-04}& {1.84 }& {2.76e-05 }& {1.74 }& { 1.72}\\
  5120 &   5120 &{2.20e-04} &{ 1.86 }& {3.24e-05}& {1.89 }& {6.48e-06 }& {2.09 }& { 7.20}\\
\hline\hline
    80 &      8 &{7.93e-01} &{ 1.66 }& {1.45e-01}& {1.62 }& {4.90e-02 }& {1.51 }& { 0.00}\\
   160 &     16 &{2.00e-01} &{ 1.99 }& {3.62e-02}& {2.00 }& {2.18e-02 }& {1.17 }& { 0.00}\\
   320 &     32 &{6.40e-02} &{ 1.64 }& {1.21e-02}& {1.58 }& {9.91e-03 }& {1.14 }& { 0.01}\\
   640 &     64 &{2.31e-02} &{ 1.47 }& {4.38e-03}& {1.47 }& {4.75e-03 }& {1.06 }& { 0.03}\\
  1280 &    128 &{8.71e-03} &{ 1.41 }& {1.61e-03}& {1.44 }& {2.32e-03 }& {1.04 }& { 0.12}\\
  2560 &    256 &{3.55e-03} &{ 1.29 }& {6.22e-04}& {1.37 }& {1.14e-03 }& {1.02 }& { 0.62}\\
  5120 &    512 &{1.50e-03} &{ 1.24 }& {2.49e-04}& {1.32 }& {5.65e-04 }& {1.01 }& { 3.41}\\
\hline
\end{tabular}
\end{center}
\caption{\label{tab:CN_highCFL} {CN} scheme \eqref{eq:CNscheme-1}
with different mesh parameters $N=J{+1}$ and $N={(}J{+1)}/10$.}
\end{table}

\begin{table}[!hbtp]
\begin{center}
\begin{tabular}{|cc|cc|cc|cc|c|}
\hline
\multicolumn{2}{|c|}{Mesh}
   & \multicolumn{2}{|c|}{Error $L^1$} & \multicolumn{2}{|c|}{Error $L^2$} & \multicolumn{2}{|c|}{Error $L^\infty$}
   &  time(s)
   \\
\hline
  $J{+1}$ & $N$   &  error   &  order &    error & order  & error    & order   &         \\
\hline\hline
    80 &     80 &{7.21e-01} &{ 1.88 }& {1.27e-01}& {1.80 }& {4.49e-02 }& {1.20 }& { 0.01}\\
   160 &    160 &{1.42e-01} &{ 2.35 }& {2.28e-02}& {2.48 }& {5.29e-03 }& {3.09 }& { 0.02}\\
   320 &    320 &{3.79e-02} &{ 1.90 }& {6.04e-03}& {1.92 }& {1.40e-03 }& {1.92 }& { 0.04}\\
   640 &    640 &{1.01e-02} &{ 1.91 }& {1.58e-03}& {1.93 }& {3.57e-04 }& {1.97 }& { 0.13}\\
  1280 &   1280 &{2.79e-03} &{ 1.85 }& {4.29e-04}& {1.88 }& {9.21e-05 }& {1.95 }& { 0.44}\\
  2560 &   2560 &{7.98e-04} &{ 1.80 }& {1.20e-04}& {1.84 }& {2.76e-05 }& {1.74 }& { 1.79}\\
  5120 &   5120 &{2.20e-04} &{ 1.86 }& {3.24e-05}& {1.89 }& {6.48e-06 }& {2.09 }& { 8.30}\\
\hline\hline
    80 &      8 &{7.04e-01} &{ 1.83 }& {1.35e-01}& {1.73 }& {4.90e-02 }& {1.51 }& { 0.00}\\
   160 &     16 &{8.05e-02} &{ 3.13 }& {2.52e-02}& {2.42 }& {1.61e-02 }& {1.60 }& { 0.00}\\
   320 &     32 &{5.17e-02} &{ 0.64 }& {1.05e-02}& {1.26 }& {6.51e-03 }& {1.31 }& { 0.01}\\
   640 &     64 &{3.22e-02} &{ 0.68 }& {5.24e-03}& {1.00 }& {3.38e-03 }& {0.94 }& { 0.03}\\
  1280 &    128 &{1.83e-02} &{ 0.82 }& {2.72e-03}& {0.94 }& {1.76e-03 }& {0.95 }& { 0.11}\\
  2560 &    256 &{9.76e-03} &{ 0.91 }& {1.40e-03}& {0.96 }& {8.95e-04 }& {0.97 }& { 0.55}\\
  5120 &    512 &{5.07e-03} &{ 0.94 }& {7.17e-04}& {0.97 }& {4.53e-04 }& {0.98 }& { 2.36}\\
\hline
\end{tabular}
\end{center}
\caption{\label{tab:CN-2-highCFL} {CN} scheme \eqref{eq:CNscheme-2}
(for solving \eqref{eq:2}) with different mesh parameters $N=J{+1}$ and $N={(}J{+1)}/10$.
}
\end{table}

\section{BDF obstacle schemes} \label{sec:BDFimp}

We now consider BDF type approximations for the first derivative $u_t$, leading to implicit schemes.
We propose two implicit schemes (BDF2 and BDF3) which have the same complexity as the previous CN implicit schemes but give
improved numerical results with respect to precision and to stability.
Furthermore a stability and error analysis will be carried out for the BDF2 scheme.

\subsection{BDF2 obstacle scheme}
Our first scheme is therefore the following two-step implicit scheme (hereafter also referred {to} as BDF2 obstacle scheme),
for $n\geq 1$:
\be
 & & \hspace{-1.0cm} \cH^{n+1}_j(u):\equiv
   \nonumber \\
 & & 
   \min\bigg( \frac{3u^{n+1}_j - 4u^n_j + u^{n-1}_j}{2\dt}  +  (A u^{n+1} + q^{n+1})_j,\ u^{n+1}_j-{\varphi_j^{n+1}}\bigg) - f^{n+1}_j = 0,
   \nonumber \\
 & &
   \label{eq:BDF2scheme}
\ee
initialized with appropriate $u^0$ and $u^1$ values.
Such approximations for the linear term $u_t$, known  as BDF approximations,
are well known and used in various contexts \cite{curtiss52ios,gear71niv}.
For $u^1$, e.\,g.\ the implicit Euler method {(IE) (corresponding to a first order BDF method)}
\[
\min\bigg( \frac{u^{1}_j - u^0_j}{\dt}  +  (A u^{1} + q^{1})_j,\ u^{1}_j-\varphi_j^{1}\bigg) - f^{1}_j = 0,
\]
or the {CN} scheme \eqref{eq:CNscheme-1} with $n=0$ could be used.
In the following, for the numerical tests, the first step $u^1$ is always
computed by a {CN} scheme (see in particular Remark~\ref{rem:technical_details}).

The use of a BDF scheme for a diffusion plus obstacle problem is not new
({Windcliff et al \cite{windcliff01soa},} Oosterlee et al \cite{oos-2003,oos-gas-fri-2003},
the idea was also suggested by Seydel in~\cite[see pages 187 and 217]{seydel-12}).
To the best of our knowledge, a precise analysis of the scheme was missing {so far}.

By construction the scheme has the following consistency error, when $v$ is regular, for $v^n_j=v(t_n,x_j)$:
\be \label{eq:BDF2imp}
  \cH^{n+1}_j(v)
    & = & \min( v_t + \cA v, v-\varphi)(t_{n+1},x_j) - f(t_{n+1},x_j)\nonumber \\
    &   & \ \  + O(\dt^2 \| v_{3t}\|_\infty) + O(\dx^2 (\| v_{3x}\|_\infty + \| v_{4x}\|_\infty)).
\ee
{We summarize this in the following Lemma, to be compared to Lemma~\ref{lem:2.3}.

\begin{lemma}\label{lem:3.2}
If $v$ is regular, the BDF scheme \eqref{eq:BDF2scheme}
is second order consistent in time and space with respect to the obstacle problem~\eqref{eq:1}.
\end{lemma}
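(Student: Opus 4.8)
The plan is to establish \eqref{eq:BDF2imp} directly by Taylor expansion and then to observe that its right-hand side vanishes when $v$ solves \eqref{eq:1a}. The structural feature that makes the argument short — in contrast to the case distinction needed for the Crank--Nicolson scheme in Lemma~\ref{lem:2.3}(i) — is that every term of $\cH^{n+1}_j$ is anchored at the single time node $t_{n+1}$, so no interplay between $t_{n+1/2}$ and $t_{n+1}$ has to be resolved, and in particular the obstacle term is evaluated exactly at the node where consistency is measured.

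First I would expand the BDF2 difference quotient. For $v$ regular (say $C^3$ in time), Taylor expansions of $v^n_j=v(t_{n+1}-\dt,x_j)$ and $v^{n-1}_j=v(t_{n+1}-2\dt,x_j)$ around $t_{n+1}$ give the standard second-order estimate
\[
  \frac{3 v^{n+1}_j - 4 v^n_j + v^{n-1}_j}{2\dt} = v_t(t_{n+1},x_j) + O(\dt^2 \|v_{3t}\|_\infty).
\]
Next I would invoke the spatial consistency already recorded after \eqref{eq:A2ndorder}, namely that the centered difference operator satisfies $(A v^{n+1} + q^{n+1})_j = (\cA v)(t_{n+1},x_j) + O(\dx^2(\|v_{3x}\|_\infty + \|v_{4x}\|_\infty))$. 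Adding these two identities shows that the first argument of the $\min$ equals $(v_t + \cA v)(t_{n+1},x_j) + O(\dt^2 + \dx^2)$, while the second argument is exactly $v^{n+1}_j - \varphi^{n+1}_j = (v-\varphi)(t_{n+1},x_j)$ and the subtracted source is exactly $f^{n+1}_j = f(t_{n+1},x_j)$.

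To push the two perturbations through the nonlinearity I would use that $\min$ is $1$-Lipschitz, namely $|\min(a,b)-\min(a',b')| \leq \max(|a-a'|,|b-b'|)$. Since the only error sits in the first argument and is $O(\dt^2 + \dx^2)$, this yields \eqref{eq:BDF2imp}:
\[
  \cH^{n+1}_j(v) = \min(v_t + \cA v,\, v - \varphi)(t_{n+1},x_j) - f(t_{n+1},x_j) + O(\dt^2 + \dx^2).
\]
Finally, when $v$ is the exact solution of \eqref{eq:1a}, the leading term $\min(v_t + \cA v, v-\varphi)(t_{n+1},x_j) - f(t_{n+1},x_j)$ vanishes, leaving $\cH^{n+1}_j(v) = O(\dt^2 + \dx^2)$, which is the asserted second-order consistency.

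I do not expect a genuine obstacle in this argument; the only point that needs care is the Lipschitz estimate that transfers the consistency errors across the $\min$, together with the observation that — unlike in Lemma~\ref{lem:2.3}(i) — neither the assumption that $v$ solves the PDE nor any splitting into cases according to whether the obstacle is active is required to reach \eqref{eq:BDF2imp}. That simplification is precisely the benefit of evaluating the discrete time derivative, the spatial operator and the obstacle all at $t_{n+1}$, and it is the reason why the BDF2 scheme compares favourably with the Crank--Nicolson scheme.
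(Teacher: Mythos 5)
Your proof is correct and follows essentially the same route as the paper, which obtains \eqref{eq:BDF2imp} ``by construction'' from the Taylor expansion of the BDF2 quotient and the spatial consistency of $A$, both anchored at $t_{n+1}$; your explicit use of the $1$-Lipschitz property of $\min$ merely spells out the step the paper leaves implicit. Your closing observation that, unlike Lemma~\ref{lem:2.3}$(i)$, no case analysis or use of the PDE at intermediate times is needed matches exactly the remark the paper makes after the lemma.
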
}%
This consistency error justifies the introduction of BDF schemes that precisely approximate
$u_t + \cA u$ at time $t_{n+1}$ without the need of other particular requirements (there is no requirement that $v_t+\cA v=0$ at previous times
$t<t_{n+1}$, which would not hold in the presence of an obstacle term).

Let $\min(X,Y):=(\min(x_j,y_j))_j$ denote
the minimum of two vectors $X=(x_j),Y=(y_j)$ of $\R^J$.
For convenience, the scheme {\eqref{eq:BDF2scheme}} will also be written as follows:
\begin{multline}
  \min\bigg( (I_J +  \frac{2}{3}\dt A)\, u^{n+1} - \frac{4}{3} u^n + \frac{1}{3} u^{n-1}
           + \frac{2}{3} \dt  q^{n+1} - {\frac{2}{3} }\tau f^{n+1},\ u^{n+1}-{\varphi^{n+1}}-f^{n+1}\bigg) \\  = 0
  \label{eq:uscheme}
\end{multline}
{with $I_J$ denoting the $J$-dimensional identity matrix.} (After subtracting $f^{n+1}$, a multiplication by $\frac{2\dt}{3}>0$ of the left part of the $\min$ term
does not change the equation.)

\begin{rem}[Newton's method]
As already mentioned before, the scheme can be solved by a semi-smooth Newton method {\cite{bok-mar-zid-2009}}.
More precisely, denoting
$B:=I_J+\frac{2}{3}\dt A$ (a real valued $J\times J$ matrix), $\oldbcoeff:= \frac{4}{3} u^n - \frac{1}{3} u^{n-1} - \frac{2}{3} \dt  q^{n+1}{+\frac{2}{3} \dt  f^{n+1}}$,
 {and $g:= \varphi^{n+1}+ f^{n+1}$,}
the problem is to solve for $x\in \R^J$
\be
  \label{eq:obs}
  \min (B x - \oldbcoeff, x- g)=0 \quad \mbox{in $\R^J$.}
\ee
The matrix $B$ satisfies the conditions of Remark {\ref{eq:newton}} ensuring the convergence of Newton's algorithm {provided that $\frac{\dt}{\dx}|b|\leq\frac32+\dt r$}.
\end{rem}

Results for the {BDF2 obstacle} scheme are given in Table~\ref{tab:BDF2-imp} {for $N=J{+1}$ and $N={(}J{+1)}/10$ (larger time steps)}
using the same parameters as {in} \eqref{eq:parameters}.
These results show robustness of the scheme even for large time steps and also an improvement of the convergence with respect
to the CN schemes (the order is closer to $2$ even for $N={(}J{+1)}/10$). {Note that the results indicate second order convergence although by estimate \eqref{eq:BDF2imp} we can only expect second order convergence for solutions that are three times continuously differentiable with respect to time and four times continuously differentiable with respect to space.}

\begin{table}[!hbtp]
\begin{center}
\begin{tabular}{|cc|cc|cc|cc|c|}
\hline
\multicolumn{2}{|c|}{Mesh}
   & \multicolumn{2}{|c|}{Error $L^1$} & \multicolumn{2}{|c|}{Error $L^2$} & \multicolumn{2}{|c|}{Error $L^\infty$} &  time(s)
   \\ \hline
  $J{+1}$ & $N$   &  error   &  order &    error & order  & error    & order   &         \\
\hline\hline
    80 &     80 &{7.11e-01} &{ 1.87 }& {1.26e-01}& {1.79 }& {4.48e-02 }& {1.18 }& { 0.01}\\
   160 &    160 &{1.35e-01} &{ 2.40 }& {2.18e-02}& {2.53 }& {5.09e-03 }& {3.14 }& { 0.01}\\
   320 &    320 &{3.52e-02} &{ 1.94 }& {5.66e-03}& {1.94 }& {1.33e-03 }& {1.93 }& { 0.04}\\
   640 &    640 &{9.12e-03} &{ 1.95 }& {1.46e-03}& {1.96 }& {3.41e-04 }& {1.97 }& { 0.13}\\
  1280 &   1280 &{2.43e-03} &{ 1.91 }& {3.84e-04}& {1.93 }& {8.88e-05 }& {1.94 }& { 0.48}\\
  2560 &   2560 &{6.60e-04} &{ 1.88 }& {1.03e-04}& {1.90 }& {2.72e-05 }& {1.70 }& { 1.56}\\
  5120 &   5120 &{1.73e-04} &{ 1.93 }& {2.61e-05}& {1.98 }& {5.79e-06 }& {2.23 }& { 7.25}\\
\hline\hline
    80 &      8 &{4.56e-01} &{ 1.71 }& {7.75e-02}& {1.49 }& {3.59e-02 }& {0.35 }& { 0.00}\\
   160 &     16 &{7.29e-02} &{ 2.65 }& {9.73e-03}& {2.99 }& {2.20e-03 }& {4.02 }& { 0.00}\\
   320 &     32 &{2.25e-02} &{ 1.69 }& {3.38e-03}& {1.53 }& {8.82e-04 }& {1.32 }& { 0.01}\\
   640 &     64 &{7.31e-03} &{ 1.62 }& {1.17e-03}& {1.53 }& {2.98e-04 }& {1.56 }& { 0.02}\\
  1280 &    128 &{2.17e-03} &{ 1.75 }& {3.52e-04}& {1.74 }& {8.65e-05 }& {1.79 }& { 0.11}\\
  2560 &    256 &{5.22e-04} &{ 2.06 }& {8.21e-05}& {2.10 }& {2.40e-05 }& {1.85 }& { 0.38}\\
  5120 &    512 &{1.07e-04} &{ 2.29 }& {1.39e-05}& {2.56 }& {5.79e-06 }& {2.05 }& { 1.24}\\
\hline
\end{tabular}
\end{center}
\caption{\label{tab:BDF2-imp} BDF2 scheme for \eqref{eq:1}.}
\end{table}

\begin{rem}\label{rem:technical_details}
We have numerically observed that if we compute the first step with an {IE}
obstacle scheme ({corresponding to} BDF1) and the BDF2 scheme is otherwise
unchanged for the next steps,
then the results are not as clear as in Table 3 (second order convergence does not appear clearly),
and {with $N=(J+1)/10$ we observe rather first order convergence}.
\end{rem}

\subsection{BDF3 obstacle scheme}
In the same way,
we propose the following three-step (BDF3) implicit scheme, for $n\geq 2$:
\begin{multline*}
  \cH^{n+1}_j(u) :\equiv
    \min\bigg( \frac{\frac{11}{6} u^{n+1}_j - 3 u^n_j + \frac{3}{2} u^{n-1}_j
       -\frac{1}{3} u^{n-2}_j}{\dt}
      +  (A u^{n+1} + q^{n+1})_j,\\u^{n+1}_j-{\varphi^{n+1}_j}\bigg) - f^{n+1}_j = 0.
\end{multline*}
The scheme may be initialized by any second order approximation for the first two steps $u^1$ and $u^2$,
{we have chosen the CN scheme for $u^1$ and the BDF2 scheme for $u^2$.}

As we have done for the BDF2 scheme, we can multiply the left term by $6\dt$,
define $B:=11\, I_{{J}}  + 6 \dt A$,
and obtain then an equivalent scheme in the following form, for $n\geq 2$, in $\R^J$:
\[
   \min\bigg( B u^{n+1}  - 18 u^n +  9 u^{n-1} - 2 u^{n-2}
      +  6 \dt q^{n+1} -{6}\tau f^{n+1},\ u^{n+1}-{\varphi^{n+1}} - f^{n+1}\bigg) = 0. \]
The unknown $u^{n+1}$ can be solved by using again a
semi-smooth Newton method.

{
We have observed that the numerical results with the BDF3 scheme are not as good as in Table~\ref{tab:BDF2-imp} with the BDF2 scheme
for the American option problem (the order of convergence is two for $N=J{+1}$ and closer to one for $N={(}J{+1)}/10$).
Since there is a jump in the second order derivative $v_{xx}$, we do not expect better than second order convergence in
this case.
We do not have a convergence result for BDF3 since even in the linear case the scheme is known not to be $A$-stable
(see Remark~\ref{rem:BDF3scheme}).
}

The performance of the BDF3 obstacle scheme (using a $4$th order approximation in space)
will be tested in Section~\ref{sec:num:res1-2} on a model problem with
a bounded $v_{3x}$ derivative, showing third order in that case.

\section{Stability and error estimate for the BDF2 scheme}\label{sec:convEstimates}

Throughout this section, we will consider the following assumptions:

\medskip

{\bf Assumption (A1):}
\begin{itemize}
\item $a\equiv \frac{1}{2}\ms^2$, $b$ and $r$ are bounded functions {(this follows already from $\sigma,b,r$ being Lipschitz continuous on the finite domain $\mO$)},
\item there exists $\eta_0>0$ such that $a(t,x)\geq \eta_0>0$ for all $t,x$,
\item $a$ is Lipschitz continuous in $x$ uniformly w.r.t.\ $t$, that is:
\be \label{eq:a-lip}
  \exists L\geq 0, \ |a(t,x)-a(t,y)|\leq L|x-y|, \quad t\in (0,T),\ (x,y)\in \mO^2.
\ee
\end{itemize}

{
\begin{rem}
For the error analysis, no regularity assumption will be needed neither on the obstacle ${\varphi}$ nor the source term $f$. Indeed, these terms
vanish in the consistency error analysis.
\end{rem}

\begin{rem}\label{rem:degenerate-coef}
In the case that the diffusion coefficient may degenerate some analysis may hold without the obstacle term
(see~\cite{Bokanowski_Picarelli_Reisinger_2018}).
\end{rem}

}

\subsection{Stability estimate}\label{sec:stab-estim}

Let us first start by considering an abstract obstacle problem of the form
$$ \min(B \linconnu - \oldbcoeff, \linconnu-g)=0\quad \text{ for }\quad \linconnu\in \R^J,$$
where $B$ is a square matrix of size $J$ and $\oldbcoeff,g$ are given vectors of~$\R^J$.
We will use the following elementary result.

\begin{lemma}\label{lem:Bineq}
For any matrix $B$, the  following equivalence holds:
\be  \min(B\linconnu-\oldbcoeff,\linconnu-g)=0
  & \Leftrightarrow &
    \linconnu\geq g\ \mbox{and}\ \bigg(\< B \linconnu-\oldbcoeff, v-\linconnu\>\geq 0,\ \forall v\geq g\bigg){.}
\ee
\end{lemma}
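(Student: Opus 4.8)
The plan is to prove the equivalence by establishing both implications directly, treating the two vector inequalities componentwise and exploiting the fact that the $\min$ is taken coordinate by coordinate. The statement reads $\min(B\linconnu - \oldbcoeff, \linconnu - g) = 0$, which by definition of the componentwise minimum means that for each index $j$ we have $\min\big((B\linconnu - \oldbcoeff)_j, (\linconnu - g)_j\big) = 0$. The key observation I would use throughout is the elementary scalar fact that $\min(\alpha,\beta)=0$ is equivalent to $\alpha \geq 0$, $\beta \geq 0$, and $\alpha\beta = 0$ (at least one of them vanishes). This scalar characterization is what lets me pass between the $\min$-formulation and the variational inequality.

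For the forward implication ($\Rightarrow$), I would first read off from $\min((B\linconnu-\oldbcoeff)_j,(\linconnu-g)_j)=0$ that both entries are nonnegative, which immediately gives $\linconnu \geq g$ (componentwise) and $B\linconnu - \oldbcoeff \geq 0$. To get the variational inequality, fix any $v \geq g$ and write $\langle B\linconnu - \oldbcoeff, v - \linconnu\rangle = \sum_j (B\linconnu-\oldbcoeff)_j (v_j - \linconnu_j)$. I would split the sum according to which term of the $\min$ is active at index $j$. When $(\linconnu-g)_j = 0$, i.\,e.\ $\linconnu_j = g_j$, then $v_j - \linconnu_j = v_j - g_j \geq 0$ and $(B\linconnu-\oldbcoeff)_j \geq 0$, so that term is nonnegative. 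When instead $(B\linconnu-\oldbcoeff)_j = 0$, that term simply vanishes regardless of the sign of $v_j - \linconnu_j$. Since every summand is nonnegative, the inner product is nonnegative, giving the right-hand side.

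For the reverse implication ($\Leftarrow$), I assume $\linconnu \geq g$ and $\langle B\linconnu - \oldbcoeff, v - \linconnu\rangle \geq 0$ for all $v \geq g$, and must recover $\min(B\linconnu-\oldbcoeff, \linconnu-g)=0$. From $\linconnu \geq g$ we already have $\linconnu - g \geq 0$. The crux is to extract, componentwise, that $(B\linconnu-\oldbcoeff)_j \geq 0$ and that the complementarity $(B\linconnu-\oldbcoeff)_j(\linconnu-g)_j = 0$ holds. To show $B\linconnu - \oldbcoeff \geq 0$, I would test the variational inequality against $v = \linconnu + e_j$ for each standard basis vector $e_j$; since $\linconnu \geq g$ this $v$ satisfies $v \geq g$, and the inequality reduces to $(B\linconnu-\oldbcoeff)_j \geq 0$. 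To get complementarity at an index $j$ where $\linconnu_j > g_j$, I would test against $v = \linconnu - t\, e_j$ for small $t>0$ (still feasible since $\linconnu_j - t \geq g_j$ for $t$ small), which forces $(B\linconnu-\oldbcoeff)_j \leq 0$; combined with nonnegativity this yields $(B\linconnu-\oldbcoeff)_j = 0$, so the $\min$ at that index is zero. At indices with $\linconnu_j = g_j$ the second argument of the $\min$ is already zero, so the $\min$ is zero there too.

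The statement is genuinely elementary and holds for an arbitrary matrix $B$, with no sign, monotonicity, or $M$-matrix hypothesis, so the only mild subtlety to watch is keeping the feasibility requirement $v \geq g$ satisfied when constructing the test vectors $v = \linconnu \pm t\,e_j$. The main point I would emphasize is that the whole argument decouples across coordinates: once the scalar equivalence $\min(\alpha,\beta)=0 \Leftrightarrow (\alpha\geq 0,\ \beta \geq 0,\ \alpha\beta=0)$ is in hand, the vector version follows by applying it index by index, and the variational inequality is just the aggregated, tested form of the per-coordinate complementarity conditions. I do not expect any serious obstacle here; the bookkeeping of which basis-vector perturbations remain feasible is the only thing requiring care.
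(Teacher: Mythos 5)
Your proof is correct and takes essentially the same approach as the paper: both implications are verified directly, with the forward direction resting on componentwise complementarity and the reverse direction on testing the variational inequality with feasible perturbations of $\linconnu$. The only minor difference is in the reverse direction, where you obtain pointwise complementarity from local downward tests $v=\linconnu-t\,e_j$ at indices with $\linconnu_j>g_j$, while the paper instead tests once with $v=g$ to get $\<B\linconnu-\oldbcoeff,\linconnu-g\>=0$ and concludes because a vanishing sum of nonnegative terms forces every term to be zero; both routes are equally valid for arbitrary $B$.
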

\proof
It is known~\cite{cia-82}
that if $B$ is a positive definite symmetric matrix,
the following equivalences hold:
\be  \min(B\linconnu-\oldbcoeff,\linconnu-g)=0
  & \Leftrightarrow &   \mbox{$\linconnu$ solves} \  \min_{\linconnu\geq g} \frac{1}{2} \<\linconnu,B\linconnu\> -\<\oldbcoeff,\linconnu\> \label{eq:ineq1} \\
  & \Leftrightarrow &   \linconnu\geq g\ \mbox{and}\ \bigg(\< B \linconnu-\oldbcoeff, v-\linconnu\>\geq 0,\ \forall v\geq g\bigg)
    \label{eq:ineq2}{.}
\ee
When $B$ is not symmetric,
the equivalence between the $\min$ equation and
\eqref{eq:ineq2} is still true:\\
$\Rightarrow$: For $v\geq g$, $\< B\linconnu-\oldbcoeff,v-\linconnu\> = \<B\linconnu-\oldbcoeff,v-g\> + \underbrace{\<B\linconnu-\oldbcoeff,g-\linconnu\>}_{=0}$ so is
{nonnegative} since $B\linconnu-\oldbcoeff\geq0$ and $v-g\geq 0$.\\
$\Leftarrow$: By taking $v=\linconnu + \ml e_j$ with $\ml\conv +\infty$ we get $(B\linconnu-\oldbcoeff)_j\geq 0$, hence
$B\linconnu-\oldbcoeff\geq0$. Then, $\<B\linconnu-\oldbcoeff,\linconnu-g\>\geq 0$, and also $\<B\linconnu-\oldbcoeff,\linconnu-g\>\leq 0$ by taking $v=g$ as a test
function in the inequality. Hence $\<B\linconnu-\oldbcoeff,\linconnu-g\>= 0$. Together with $B\linconnu-\oldbcoeff\geq0$, $\linconnu-g\geq 0$, this implies
that $\min(B\linconnu-\oldbcoeff,\linconnu-g)=0$.
\endproof

The idea now is to use the inequality of Lemma \ref{lem:Bineq} in order to obtain
a stability estimate. For {parabolic problems}, it is possible to obtain stability  estimates in the $L^2$ norm for the Gear (or BDF2) scheme (see for instance~\cite{emm-05}).
We are going to obtain similar estimates for {the scheme \eqref{eq:uscheme} applied to} the obstacle problem {\eqref{eq:1}}.

Let $v(t,x)$ be a regular enough function, $v^n_{{j}}:=v(t_n,x_j)$, and
$\bar\eps^n\in \R^J$ be {defined by
\begin{equation}
\bar\eps^n_j=\frac{1}{2\dt}( 3v^{n+1}_j - 4 v^n_j + v^{n-1}_j ) + (A v^{n+1}
    + q^{n+1})_j
    -(v_t+ \cA v)(t_{n+1},x_j),
 \label{eq:epsn-def}
\end{equation}}
{$n=1,\dots,N-1$.} The term $\bar \eps^n$ corresponds to a consistency error for the linear part of the PDE,
here written in discrete form on the grid mesh.

{
\begin{rem}\label{rem:not-well-defined}
If $v$ is continuous but $v_t,v_x,v_{xx}$ are not well defined at $(t_{n+1},x_j)$, we can still define
$\bar \eps^n_j$ as follows.
We consider a definition of $v_t(t_{n+1},x_j)$, $v_x(t_{n+1},x_j)$ and $v_{xx}(t_{n+1},x_j)$ such that $\bar\eps^n_j$ (defined by \eqref{eq:epsn-def})
satisfies the bound
\be \label{eq:epsn-bound-singular}
  |\bar\eps^n_j|\leq C \bigg( \|v_t(t_{n+1},.)\|_{L^\infty(\mO)} + \|v_x (t_{n+1},.)\|_{L^\infty(\mO)}+ \|v_{xx} (t_{n+1},.)\|_{L^\infty(\mO)}
  \bigg)
\ee
with a constant $C\geq 0$ (independent of $n,j$).
This bound assumes that the exact derivatives $v_t(t_{n+1},.)$, $v_{xx}(t_{n+1},.)$ exist a.e.\ on $\mO$ (with possible discontinuities) and
are bounded, and this will be considered later on in assumption (A2).
For instance, extending the domain of definition of $v_t$, $v_x$ and $v_{xx}$ to whole $\Omega$ by $v_t= v_x=v_{xx}:=0$ at places of non-differentiability is a possible choice.
\end{rem}
}%
Then we have
\be \label{eq:vscheme-0}
  \min\big(\frac{1}{2\dt}( 3v^{n+1} - 4 v^n + v^{n-1} ) + A v^{n+1}
  + q^{n+1} {-f^{n+1}}- \bar\eps^n, \ v^{n+1}-g^{{n+1}})=0
\ee
{with $g^{n+1}:=\varphi^{n+1}+f^{n+1}$.}
Therefore $v^n$ satisfies a perturbed scheme, as follows:
\begin{equation}
  \min \big((I_J+ \frac{2\dt}{3} A) v^{n+1} - \frac{4}{3} v^n + \frac{1}{3} v^{n-1}
   + \frac{2\dt}{3} q^{n+1}{-\frac{2\dt}{3} f^{n+1}} - \frac{2\dt}{3} \bar\eps^n,\
   v^{n+1}- g^{{n+1}}\big) =   0.
   \label{eq:vscheme}
\end{equation}

\begin{rem}
Typically $\bar\eps^n$ is of order $O(\dt^2 + h^2)$ where $v$ is
regular.
\end{rem}

Our aim is now to show a stability estimate in order to control the error
$\|u^n-v^n\|^2_2$ in terms of $\sum_{1\leq k\leq n-1} \dt \|\bar \eps^n\|^2$.

For a vector $x=(x_j)_{1\leq j\leq J}$, let
\be\label{eq:Nseminorm}
  N(x):= \left(\sum_{j=1}^{J+1} |x_j - x_{j-1}|^2 \right)^{1/2}
\ee
(with the convention $x_0:=0$ and $x_{J+1}:=0$).

{
The following shows a coercivity bound for the matrix $A$.
}

\begin{lemma}\label{lem:4.6}
Under assumption (A1),
there exist $\eta>0$ and $\mg\geq 0$ such that
{for $A$ given by \eqref{eq:A2ndorder} and}
for all $e\in\R^J$:
\be\label{eq:coer}
  \<e,Ae\> \geq \eta N(e/h)^2 - \mg \|e\|_2^2.
\ee
\end{lemma}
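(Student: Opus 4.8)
The goal is to prove the coercivity bound
\[
  \<e,Ae\> \geq \eta N(e/h)^2 - \mg \|e\|_2^2
\]
for the matrix $A$ defined by the second-order finite difference stencil \eqref{eq:A2ndorder}. My plan is to expand the quadratic form $\<e,Ae\>$ explicitly in terms of the stencil coefficients $\mb^n_i = \frac{1}{2h^2}\ms^2$, $\mg^n_i=\frac{b}{2h}$ and $r$, group the resulting sum so that the diffusion part produces the discrete Dirichlet-type energy $N(e/h)^2 = \sum_{j=1}^{J+1}|e_j-e_{j-1}|^2/h^2$, and then absorb the advection and zeroth-order contributions into the negative $-\mg\|e\|_2^2$ term.

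First I would write out $\<e,Ae\> = \sum_{i,j} a_{i,j} e_i e_j$ using \eqref{eq:A2ndorder}. The diagonal term contributes $\sum_i (2\mb_i + r_i)e_i^2$ and the off-diagonal terms contribute the super- and sub-diagonal pieces involving $\mb_i$ and $\mg_i$. The key algebraic step is the summation-by-parts / Abel rearrangement that turns the tridiagonal diffusion part into $\sum_{j=1}^{J+1}\mb_{?}(e_j-e_{j-1})^2$, i.e. the discrete $H^1_0$ seminorm. Here the main technical subtlety is that the coefficient $\mb_i=\frac{1}{2h^2}\ms^2(t_n,x_i)$ varies with $i$, so the rearrangement does not produce a clean $\sum \mb(e_j-e_{j-1})^2$; there will be cross terms of the form $(\mb_i-\mb_{i-1})$ times products of $e_i,e_{i-1}$. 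This is precisely where assumption (A1) enters: the Lipschitz continuity \eqref{eq:a-lip} gives $|\mb_i-\mb_{i-1}| \leq \frac{L}{2h^2}|x_i-x_{i-1}| = \frac{L}{2h}$, so these error terms are of lower order (one power of $h$ better) and can be controlled by a Cauchy–Schwarz / Young's inequality argument at the cost of an $\|e\|_2^2$ contribution.

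The advection terms coming from $\mg_i=\frac{b}{2h}$ are antisymmetric in structure and, after summation by parts, produce terms of the form $\frac{b}{2h}(e_{i+1}-e_{i-1})e_i$ which I would bound using Young's inequality $|xy|\leq \frac{\veps}{2}x^2+\frac{1}{2\veps}y^2$, splitting one factor of $\frac{1}{h}(e_i-e_{i\pm1})$ into the positive seminorm $\eta N(e/h)^2$ (with a small enough weight $\veps$ chosen relative to the uniform lower bound $a\geq\eta_0>0$) and the remaining factor into $\|e\|_2^2$. The boundedness of $b$ and $r$ from (A1) ensures these absorbed terms are uniformly bounded by a constant times $\|e\|_2^2$, which fixes $\mg$. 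Choosing $\eta$ to be a suitable fraction of $\eta_0$ (so that the positive diffusion energy dominates after subtracting the advection's share of the seminorm) completes the estimate.

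I expect the main obstacle to be the careful bookkeeping of the variable-coefficient summation by parts: one must verify that the discrete energy identity yields exactly $\sum_{j=1}^{J+1}\mb_{i(j)}|e_j-e_{j-1}|^2$ up to commutator terms $(\mb_i-\mb_{i-1})$, and that every such commutator, together with the advection cross terms, is genuinely absorbable into $\|e\|_2^2$ using only the Lipschitz bound and boundedness from (A1). The boundary convention $e_0=e_{J+1}=0$ built into $N$ must be used consistently so that no boundary remainder survives the rearrangement. Once the seminorm with variable weight is obtained, bounding $\mb_{i(j)}\geq \eta_0/h^2$ from below via $a\geq\eta_0$ gives the clean $\eta N(e/h)^2$ lower bound, which is the final packaging step.
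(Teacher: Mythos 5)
Your proposal is correct and follows essentially the same route as the paper's proof: the paper's ``straightforward calculation'' is exactly your variable-coefficient summation by parts, yielding $\langle e,Ae\rangle = \frac{1}{h^2}\sum_{i=1}^{J+1}(a_ie_i-a_{i-1}e_{i-1})(e_i-e_{i-1}) + \frac{1}{2h}\sum_i b_i(e_{i+1}-e_{i-1})e_i + \sum_i r_ie_i^2$, after which the Lipschitz bound $|a_i-a_{i-1}|\leq Ch$ controls the commutator terms, Cauchy--Schwarz and Young's inequality absorb the advection and cross terms into a fraction of the seminorm plus $\|e\|_2^2$, and the lower bound $a\geq\eta_0$ delivers $\eta=\eta_0/2$, just as you outline. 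There is no gap; your plan matches the paper's argument step for step, including the use of the boundary convention $e_0=e_{J+1}=0$.
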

\begin{proof}
Considering $\cA u = - a(t,x) u_{xx} + b(t,x) u_x + r(t,x) u$
and hereafter {not} explicitly mentioning the time variable,
it holds
$$
  A=\frac{1}{h^2}\tridiag(-a_i,\ 2 a_i,\ -a_i) + \frac{1}{2h}\tridiag(-b_{i},\ 0,\ b_{i}) + \diag({r}_i)
$$
where $a_i=a(x_i)$, $b_i=b(x_i)$ and $r_i=r(x_i)$.  By straightforward calculations,
\begin{equation}
  \<e,Ae\>
  =
    \frac{1}{h^2} \sum_{i=1}^{J+1} (a_i e_i - a_{i-1} e_{i-1})(e_i-e_{i-1})
   + {\frac1{2h}}\sum_{i=1}^J b_i (e_{i+1}-e_{i-1}) e_i + \sum_{i=1}^J {r}_i e_i^2.
\end{equation}
Now we make use of $|a_{i}-a_{i-1}|\leq C h$ for some constant $C\geq 0$ (since $a(\cdot)$ is Lipschitz continuous), and $a_i\geq \eta_0$, to obtain:
\begin{align}
  \frac{1}{h^2} \sum_{i=1}^{J+1} (a_i e_i - a_{i-1} e_{i-1})(e_i-e_{i-1})
  & \geq
  \frac{1}{h^2} \sum_{i=1}^{J+1} \eta_0 (e_i-e_{i-1})^2 - {\frac{1}{h}}\sum_{i=1}^J C |e_{i}|\,|e_i - e_{i-1}| \nonumber \\
  & \geq
  \eta_0 N(e/h)^2 - C \|e\|_2 N(e/h).
\end{align}
We have also, by using $e_{i+1}-e_{i-1}=(e_{i+1}-e_i) + (e_i - e_{i-1})$:
\[
  \sum_{i=1}^J |b_i (e_{i+1}-e_{i-1}) e_i | \leq \|b\|_\infty 2 N(e) \|e\|_2.
\]
Hence {there exists} a lower bound {of the} form: 
\beno
  \<e,Ae\>  \geq \eta_0 N(e/h)^2 - (C + \|b\|_\infty) N(e/h) \|e\|_2 - C \|e\|_2^2
\eeno
for some constant $C$. Denoting $C':=C + \|b\|_\infty$ and
applying the inequality $C' N(e/h) \|e\|_2 \leq \frac{\eta_0}{2} N(e/h)^2 + \frac{C'^2}{2\eta_0} \|e\|_2^2$,
we finally obtain
\beno
  \<e,Ae\>  \geq \frac{\eta_0}{2} N(e/h)^2 - (C+ \frac{C'^2}{2\eta_0}) \|e\|_2^2
\eeno
which gives the desired lower bound with $\eta=\eta_0/2$ and $\mg=C+\frac{C'^2}{2\eta_0}$.
\end{proof}

From now on we shall denote the error by
$$
  e^n:=v^n- u^n.
$$

\begin{prop}\label{prop:stab-estim}
Consider the scheme \eqref{eq:uscheme},
and a perturbed scheme \eqref{eq:vscheme}. 
Let $\dt>0$ be sufficiently small.
Then there exist a constant $C_1$ independent of $n$
and a constant $\bar \mg >0$
such that for all $t_n\leq T$

\begin{multline}
  e^{-\bar\mg t_n} \|e^{n{+1}}\|^2_2 + \dt{\eta} \sum_{k=1}^{n} e^{-\bar \mg t_k}  N(e^{k{+1}}/h)^2\\\leq\ C_1\bigg(\|e^0\|^2_2 + \|e^1\|^2_2 + \dt \sum_{k=1,\dots,n} e^{-\bar \mg t_k} \|\bar\eps^{{k}}\|^2_2\bigg)\label{eq:stab-estim-bis}
\end{multline}
where {$N(e^k/h)$ is defined by \eqref{eq:Nseminorm}}.
\end{prop}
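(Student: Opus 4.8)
The plan is to mimic the classical $L^2$ energy argument for the Gear/BDF2 scheme for linear parabolic problems, inserting the obstacle inequality of Lemma~\ref{lem:Bineq} at the key step so that the nonlinearity is absorbed rather than fought. Writing $B := I_J + \frac{2\dt}{3} A$, both $u^{n+1}$ and $v^{n+1}$ solve obstacle problems of the form $\min(B\,\cdot - \oldbcoeff, \cdot - g^{n+1}) = 0$ with data differing by the consistency term $\frac{2\dt}{3}\bar\eps^n$. First I would subtract the two schemes \eqref{eq:uscheme} and \eqref{eq:vscheme} and apply Lemma~\ref{lem:Bineq} to each: since $u^{n+1}\geq g^{n+1}$ and $v^{n+1}\geq g^{n+1}$, I may use $v^{n+1}$ as a test vector in the variational inequality for $u^{n+1}$, and $u^{n+1}$ as a test vector in the inequality for $v^{n+1}$. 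Adding the two resulting inequalities, the obstacle data $g^{n+1}$ cancels, and I obtain a single inequality controlling $\langle B e^{n+1}, e^{n+1}\rangle$ (with $e^{n+1}=v^{n+1}-u^{n+1}$) in terms of the BDF2 history terms $-\tfrac{4}{3}e^n + \tfrac13 e^{n-1}$ and the consistency error $\tfrac{2\dt}{3}\bar\eps^n$.

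The heart of the estimate is then the algebraic identity that makes BDF2 dissipative. I would use the standard ``$G$-norm'' (Dahlquist-type) device: the quantity $\langle \tfrac{3}{2}e^{n+1} - 2 e^n + \tfrac12 e^{n-1}, e^{n+1}\rangle$ can be bounded below by $\tfrac14(\|e^{n+1}\|_2^2 + \|2e^{n+1}-e^n\|_2^2) - \tfrac14(\|e^n\|_2^2 + \|2e^n - e^{n-1}\|_2^2)$ up to a controllable remainder, so that after summing over $k$ the telescoping leaves only the boundary terms at $k=0,1$ and at $k=n$. This is exactly where the two-step structure and the ``appropriate $u^0,u^1$'' initialization enter, explaining the $\|e^0\|_2^2 + \|e^1\|_2^2$ on the right-hand side. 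For the diffusion part I would invoke the coercivity bound of Lemma~\ref{lem:4.6}, $\langle e, A e\rangle \geq \eta N(e/h)^2 - \mg\|e\|_2^2$, which supplies the positive $\dt\,\eta\,N(e^{k+1}/h)^2$ term on the left and throws the $\mg\|e\|_2^2$ term back onto the right to be handled by Gronwall.

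The remaining routine steps are: bound the consistency cross-term by Young's inequality, $\frac{2\dt}{3}\langle \bar\eps^k, e^{k+1}\rangle \leq \dt\,C\|\bar\eps^k\|_2^2 + \dt\,C\|e^{k+1}\|_2^2$, collect all $\|e^{k+1}\|_2^2$ terms (those from coercivity's $\mg$ and from Young), and absorb them via a discrete Gronwall lemma. The exponential weight $e^{-\bar\mg t_k}$ in \eqref{eq:stab-estim-bis} is precisely the artifact of this Gronwall step: multiplying through by $e^{-\bar\mg t_k}$ with $\bar\mg$ chosen larger than the accumulated constant converts the implicit $\dt\sum \|e^{k+1}\|_2^2$ feedback into a summable, controlled quantity, provided $\dt$ is small enough that the coefficient of the top-level $\|e^{n+1}\|_2^2$ stays positive (this is the ``$\dt>0$ sufficiently small'' hypothesis).

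The main obstacle, and the genuinely new ingredient over the linear Gear analysis, is the first step: ensuring that the obstacle nonlinearity does not spoil the energy identity. The subtlety is that $u^{n+1}$ and $v^{n+1}$ satisfy their variational inequalities against \emph{different} data vectors $\oldbcoeff$ but the \emph{same} obstacle $g^{n+1}$; it is exactly the shared obstacle that lets me cross-test and cancel, yielding a clean inequality $\langle B e^{n+1}, e^{n+1}\rangle \leq \langle \tfrac43 e^n - \tfrac13 e^{n-1} + \tfrac{2\dt}{3}\bar\eps^n, e^{n+1}\rangle$ with no leftover obstacle contribution. Once this sign-definite inequality is secured, the rest is the by-now-standard BDF2 telescoping plus Gronwall, and care must only be taken that the non-symmetry of $A$ (handled already in Lemma~\ref{lem:Bineq}) and the lower-order term $\mg\|e\|_2^2$ are compatible with the exponential-weight Gronwall closure.
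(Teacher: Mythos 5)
Your proposal is correct and follows essentially the same route as the paper: cross-testing the two variational inequalities furnished by Lemma~\ref{lem:Bineq} (with $v^{n+1}$ as test vector for $u^{n+1}$ and vice versa, so the common obstacle $g^{n+1}$ cancels), invoking the coercivity of Lemma~\ref{lem:4.6}, bounding the consistency cross-term by Young's inequality, and closing with an exponentially weighted discrete Gronwall argument that requires $\dt$ small. The only difference is cosmetic: you telescope the BDF2 term via the Dahlquist $G$-norm identity $\langle \tfrac32 a - 2b + \tfrac12 c,\, a\rangle = \tfrac14\big(\|a\|_2^2+\|2a-b\|_2^2\big) - \tfrac14\big(\|b\|_2^2+\|2b-c\|_2^2\big) + \tfrac14\|a-2b+c\|_2^2$, which telescopes exactly, whereas the paper derives an equivalent lower bound carrying the increments $y_k=\|e^{k+1}-e^k\|_2^2$ through the weighted sum and absorbing them by the shift $e^{-k\beta}y_{k-1}\leq e^{-(k-1)\beta}y_{k-1}$; both devices express the same $A$-stability of BDF2 and lead to the identical final estimate.
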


\begin{proof}[Proof of Proposition~\ref{prop:stab-estim}]
Let
$$ B := I_{{J}}+ \frac{2\dt}{3} A,$$
and vectors $b_u$, $b_v$ be such that
$$
  b_u := \frac{4}{3} u^n - \frac{1}{3} u^{n-1} - \frac{2\dt}{3} q^{n+1}{+\frac{2\dt}{3} f^{n+1}}
$$
and
\[
  b_v := \frac{4}{3} v^n - \frac{1}{3} v^{n-1} - \frac{2\dt}{3} q^{n+1}{+\frac{2\dt}{3} f^{n+1}}.
\]
Then, by Lemma~\ref{lem:Bineq}, the $\min$ equation \eqref{eq:uscheme} for the exact scheme is equivalent to $u^{n+1}\geq g^{{n+1}}$ and
\begin{equation}\label{eq:1var}  \big< B u^{n+1} - b_u,\ w - u^{n+1}\big> \geq 0, \quad \forall w \geq g^{{n+1}}.
\end{equation}
The $\min$ equation \eqref{eq:vscheme} for the perturbed scheme is equivalent to $v^{n+1}\geq g^{{n+1}}$ and
\begin{equation}\label{eq:2var}  \big< B v^{n+1} - (b_v+{\frac23}\dt\bar\eps^n),\ w - v^{n+1} \big> \geq 0,
  \quad \forall w \geq g^{{n+1}}.
\end{equation}
Taking $w=v^{n+1}$ in \eqref{eq:1var} gives
\[ \big< B u^{n+1} - b_u ,\ v^{n+1} - u^{n+1} \big> \geq 0, \]
and $w=u^{n+1}$ in \eqref{eq:2var} gives
$$ \big< B v^{n+1} - (b_v+{\frac23}\dt \bar\eps^n) ,\ u^{n+1} - v^{n+1} \big> \geq 0. $$
Combining the last two relations gives
\be
   \big< B e^{n+1} - \frac{4}{3} e^n + \frac{1}{3} e^{n-1} - \frac{2\dt}{3}\bar\eps^n ,\ e^{n+1}\big> \leq  0
\ee
and therefore
\be \label{eq:firstineq}
   \big< 3 e^{n+1} - 4 e^n + e^{n-1},\ e^{n+1}\big> + 2 \dt \<e^{n+1}, A e^{n+1}\> \leq  2 \dt \<\bar\eps^n,\,e^{n+1}\> .
\ee
Now let $x_n$, $y_n$ and $z_n$ be defined by
$$
  x_n := \| e^n\|_2^2,
  \qquad y_n:=\| e^{n+1}-e^n\|_2^2,
  \qquad z_n:=2\dt \<Ae^{n{+1}},\,e^{n{+1}}\>.
$$
The following estimate holds:
\be \label{eq:err1}
  3 x_{n+1} - 4 x_n + x_{n-1} + 2 y_n + {2}z_n
  \leq  2 y_{n-1} + {4}\dt \<\bar\eps^n,e^{n+1}\>.
\ee
To prove \eqref{eq:err1}, we first use
the properties
$\<a -b,\,a\> = \frac{1}{2} (\|a\|_2^2 + \|a-b\|_2^2 - \|b||_2^2)$
as well as $\frac{1}{2}\|a+b\|_2^2 \leq  \|a\|_2^2 + \|b\|_2^2$,
to obtain
\beno
  && \hspace{-10mm}2 \<3e^{n+1}  - 4 e^n + e^{n-1},\, e^{n+1}\>\\
  & = &  2(4 \<e^{n+1}-e^n,\,e^{n+1}\> - \<e^{n+1}-e^{n-1},\, e^{n+1}\>)\\
  & = &  4(x_{n+1}+y_{n}-x_n) - (x_{n+1} + \|e^{n+1}-e^{n-1}\|_2^2 - x_{n-1})\\
  & \geq &  4(x_{n+1}+y_{n}-x_n) - (x_{n+1} + 2(y_n+ y_{n-1})- x_{n-1}) \\
  & \geq &  3 x_{n+1}-4x_n + x_{n-1} + 2y_n -2y_{n-1}
\eeno
and we conclude by using \eqref{eq:firstineq}.

Let
$$ w_n: = 4\dt \eta N(e^{n+1}/h)^2. $$
By using the bound $2\dt \<\bar\eps^n,e^{n+1}\>
   \leq 2 \dt \|\bar\eps^n\|_{{2}} \|e^{n+1}\|_{{2}}
   \leq \dt \|\bar\eps^n\|_2^2 +  \dt x_{n+1}$
and the coercivity \eqref{eq:coer},
we obtain, for $n\geq 1$:
\be \label{eq:err2}
   3 x_{n+1}-4x_n + x_{n-1} + 2y_n  +  w_n 
    \leq  2y_{n-1}  + 2\dt \|\bar \eps^n\|_2^2 + (2 \dt + 4 \dt \mg)  x_{n+1}.
\ee

Let
$$
  \bar \mg:= 2 + 4 \mg \quad \mbox{ and } \quad \mb:= \dt \bar \mg.
$$
It follows
\be \label{eq:err2b}
   (3- \mb) x_{k+1}-4x_k + x_{k-1} + 2y_k  +  w_k 
    \leq  2y_{k-1}  + 2\dt \|\bar \eps^k\|_2^2, \quad k\geq 1.
\ee

We multiply \eqref{eq:err2b} by $e^{-k\mb}$ and sum up the inequalities from $k=1$ to $n\geq 1$.
Let $f(x)=x^2  - 4x + 3-\mb$ and notice that for $\tau$ small enough (and therefore small $\mb>0$), $f(e^{-\mb})\sim\mb>0$. We deduce that for some constant $C_{01}$
\be
  & & \hspace{-1cm} e^{-n \mb} ( (3-\mb) x_{n+1} - (4-(3-\mb)e^{\mb}) x_n) + \sum_{k=1}^n e^{-k\mb} w_k \nonumber \\
  & & \hspace{-1cm} + \sum_{k={2}}^{n-{1}} e^{-(k-1)\mb}  f(e^{-\mb}) x_{k}
       + 2\sum_{k=1}^n e^{-k\mb} y_k \nonumber \\
  & & \leq\ C_{01}(x_0 + x_1)
         + 2 \sum_{k=1}^n e^{-k\mb} y_{k-1}
         + 2 \sum_{k=1}^n e^{-k\mb}{\dt} \|\bar \eps^k\|_2^2.
\ee
Using that $e^{-k\mb} y_{k-1} \leq e^{-(k-1)\mb} y_{k-1}$ and $f(e^{-\mb})>0$ we deduce that
\be
  & & \hspace{-1cm} e^{-n \mb} ( (3-\mb) x_{n+1} - (4-(3-\mb)e^{\mb}) x_n) + \sum_{k=1}^n e^{-k\mb} w_k \nonumber \\
  & & \leq\ C_{01}(x_0 + x_1) + 2 e^{-\mb} y_{0} + \sum_{k=1}^n e^{-k\mb}  2\dt \|\bar \eps^k\|_2^2 \nonumber \\
  & & \leq\ C_{02}(x_0 + x_1 +  \dt \sum_{k=1}^n e^{-k\mb}  \|\bar \eps^k\|_2^2) =: Q  \label{eq:lastineq1}
\ee
with $C_{02}:=C_{01}+4$ (where we have used that $y_0\leq 2(x_0+x_1)$).

Let us prove that $e^{-\bar\mg t_n} x_n\leq x_1 + C_{02} Q$, which will give the desired bound.
By using $k\mb{=}k\tau \bar\mg{=}\bar \mg t_k$,
we deduce from \eqref{eq:lastineq1}
$$
  x_{k+1} \leq \rho x_k + \frac{e^{\bar \mg t_n} Q}{{3-\beta}}, \quad 1\leq k\leq n,
$$
where $\rho:= \frac{4-(3-\mb)e^\mb}{3-\mb} \sim \frac{1}{3}$ as $\mb=\dt \bar \mg \conv 0$.
By recursion we get for $1\leq k\leq n$:
\beno
  x_{k}  & \leq &  \rho^{k-1} x_{1} + \frac{e^{\bar \mg t_n} Q}{{3-\beta}} (1+ \rho + \cdots + \rho^{k-2}) \\
    & \leq &  x_{1} + \frac{e^{\bar \mg t_n} Q}{{3-\beta}} \frac{1}{1-\rho}.
\eeno
By using this bound for $x_n$ into \eqref{eq:lastineq1},
we obtain the desired result
\eqref{eq:stab-estim-bis}
with a possibly different universal constant $C_1$.
This concludes the proof of Proposition~\ref{prop:stab-estim}.
\end{proof}

\begin{rem}[BDF3 scheme] \label{rem:BDF3scheme}
The previous stability estimate does not extend easily to the BDF3 obstacle scheme.
Indeed, it is known that BDF3 is not $A$-stable (as well as any BDF$k$ for $k\geq 3$, see \cite{hai-wan-10,Hairer_Wanner_96}),
which prevents the same stability analysis to apply for diffusion equations.
\end{rem}

\subsection{Error estimate for the BDF2 scheme}\label{sec:error-estim}

The following assumptions will be used.

\medskip

{\bf Assumption (A2).}
We assume that there exist an integer $p\geq 0$ and continuous functions $t\conv y_i(t)$ for $i=1,\dots,p$ with
$y_i(t)\in [X_{min},X_{max}]$ such that,
{
defining for $0\leq \tau<T$
$$\mO_{\tau,T}:=\{(t,x)\in (\tau,T)\times\mO,\ x \notin (y_i(t))_{1\leq i\leq p}\}$$
}
($\mO_{\tau,T}$ is a subdomain of $(0,T)\times\mO$), the following holds:

\begin{itemize}
\item{$(i)$}
{$(t,x)\conv v(t,x)$  is regular (i.\,e.\ $C^{2,3}$) on $\mO_{0,T}$,}

\item{$(ii)$}
{
there exist constants $\ma_i\geq 0$, $C_i\geq 0$, $i=1,\dots,4$, such that for all $\eps>0$:
\begin{gather*}
  \|v_{t}\|_{L^\infty(\mO_{\eps,T})}\leq C_{1}\eps^{-\ma_1}, \quad \|v_{{xx}}\|_{L^\infty(\mO_{\eps,T})}\leq C_{3}\eps^{-\ma_3},\\
  \|v_{{tt}}\|_{L^\infty(\mO_{\eps,T})}\leq C_2 \eps^{-\ma_2}, \quad \|v_{{xxx}}\|_{L^\infty(\mO_{\eps,T})}\leq C_{4} \eps^{-\ma_4}.
\end{gather*}
}
\end{itemize}

\begin{rem}
Assumption (A2) allows for $v_{xx}(t,.)$ to have \enquote{jumps} at the singular points $x=y_k(t)$.
\end{rem}

\medskip

{\bf Assumption (A3).} There exists {$\ma_0\in(0,1]$} such that,
for $i{=1,\dots,p}$, $t\conv y_i(t)$ is $\ma_0$-H{\"o}lder continuous on $[0,T]$.

\medskip

{
If the first step of the BDF2 scheme is initialized with the CN scheme, the following assumption will also be needed:

\medskip

{\bf Assumption (A4).}  $v_0$ is Lipschitz continuous and piecewise $C^2$ regular on $\mO$.
}

\medskip

{
Explicit examples satisfying assumptions (A2)--(A4)
will be given in the numerical section, see Remark \ref{rem:thm-error-estim_applied_to_Model1}.
}

\begin{rem} \label{rem:xs-amer}
For the American put option problem \eqref{eq:amer-pb}
with $\varphi(x):=(K-x)_{+}$ it is known that there is a unique singular point $y_1(t)\equiv x_s(t)$ such that
$v(t,x)=\varphi(x)$ for $x<x_s(t)$, $v(t,x)>\varphi(x)$ for $x>x_s(t)$, and that
\be \label{eq:rapid}
  1-x_s(t)/K \stackrel{t\conv 0^+}{\sim}  \ml (t |\ln(t)|)^{1/2}
\ee
(see \cite{bar-bur-rom-sam-1993}, and \cite[Chap 6]{ach-pir-2005}, as well as \cite{dew-how-rup-wil-1993}).
Furthermore, a function of the form of $(t |\ln(t)|)^{1/2}$ satisfies assumption (A3) for any $\ma<1/2$.
{We do not know if the American option problem satisfies (A2).}
Nevertheless, assumptions (A2)-(A3) allow for closely related problems where $\varphi$ is {Lipschitz continuous} and piecewise $C^2$
and by allowing some rapidly moving singularities $y_i(t)$ as $t\conv 0$ (such as \eqref{eq:rapid}).
\end{rem}

{
In the following error analysis, we consider the continuous $L^2$ norm on $\mO$, $\|f\|_{L^2(\mO)}:= (\int_{\mO} |f(x)|^2 dx)^{1/2}$.
We denote by $u^n$ and $\bar v^n$ the following piecewise constant functions of $L^2(\mO)$:
\beno
   u^n(x)  & := & \sum_j u^n_j 1_{I_j}(x), \\
  \bar v^n(x) & := & \sum_j v^n_j 1_{I_j}(x)= \sum_j v(t_n,x_j) 1_{I_j}(x),
\eeno
where $I_j=(x_{j}-h/2,x_j+h/2)$ and $1_{I_j}(x)=1$ if $x\in I_j$ and $1_{I_j}(x)=0$ otherwise,
and we denote also the corresponding error  $\bar e^n := u^n - \bar v^n$.
Our aim is therefore to bound the following continuous $L^2$ error
\be\label{eq:error-equality}
  \|\bar{e}^n\|_{L^2(\mO)} = \bigg(\sum_{i} h |e^n_i|^2 \bigg)^{1/2}.
\ee

\begin{rem}\label{rem:projection}
Notice that there is a uniform Lipschitz bound for $\|v_x(t_n,.)\|_{L^\infty}$
(for instance by using the representation formula \eqref{eq:stop-time-pb-phi-f} for the obstacle problem),
therefore the error introduced by the projection on piecewise constant functions is roughly bounded by $\|\bar v^n - v^n \|_{L^2(\mO)} \leq C h$.
This projection error will not be considered hereafter.
\end{rem}
}

{
\begin{theorem}{\bf (error estimate).} \label{thm:error_estim}
Assume that the exact solution $v$ of \eqref{eq:1} satisfies (A1), (A2) and (A3).
{We consider the BDF2 scheme initialized with an {IE} or a {CN} step for $u^1$.
In the second case furthermore (A4) is assumed.}
Then the BDF2 scheme satisfies the following error bound {for sufficiently small $\tau$ and $h$}:
\begin{multline}
   \max_{1\leq n\leq N} \| \bar{e}^n \|_{L^2(\mO)}^{{2}}
  \leq
    C \big( h^2 \tau^{1-2\ma_4} + h \tau^{1-2\ma_3}
  + \tau^2 \tau^{1-2\ma_2}
  + (\tau^{\ma_0} + h) \tau^{1-2\ma_1}
  +   \tau^{\ma_0}  + h \\+ {\frac{\tau^2}{h}} \big)
  \label{eq:thm-error-estim}
\end{multline}
for some constant $C\geq 0$ independent of $(\tau,h)$
if the powers $(\mb_i)_{1\leq i\leq 4}:=\{(1-2\ma_i)_{1\leq i\leq 4}$
are all non-zero (otherwise any $\tau^{\mb_i}$ with $\mb_i=0$ should be replaced by $\ln(\tau)$), {and $\alpha_2<1$}.

The term {$\frac{\tau^2}{h}$} is not needed if $u^1$ is initialized with IE.

\end{theorem}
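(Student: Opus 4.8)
The plan is to combine the stability estimate of Proposition~\ref{prop:stab-estim} with a careful consistency analysis near the singular curves $y_i(t)$. The starting point is the error bound \eqref{eq:stab-estim-bis}, which controls $\max_n e^{-\bar\mg t_n}\|e^{n+1}\|_2^2$ (and the $N$-seminorm sum) by the initialization errors $\|e^0\|_2^2+\|e^1\|_2^2$ plus the weighted consistency sum $\dt\sum_k e^{-\bar\mg t_k}\|\bar\eps^k\|_2^2$. Since $e^{-\bar\mg t_n}$ is bounded above and below by constants on $[0,T]$, it suffices to bound these three ingredients and convert the discrete $\ell^2$ norm into the continuous $L^2(\mO)$ norm via \eqref{eq:error-equality}, i.e.\ $\|\bar e^n\|_{L^2(\mO)}^2 = h\|e^n\|_2^2$, which produces an extra factor $h$.

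The heart of the argument is estimating $\dt\sum_k h\,\|\bar\eps^k\|_2^2$, the continuous-$L^2$ version of the consistency sum. Away from the singular curves, on $\mO_{\eps,T}$, the function $v$ is $C^{2,3}$ and the standard BDF2 plus centered-difference Taylor expansion gives $|\bar\eps^k_j| = O(\dt^2\|v_{3t}\|_\infty + h^2(\|v_{3x}\|_\infty+\|v_{4x}\|_\infty))$; here I would insert the growth bounds from (A2) with $\eps \sim t_k$. Near each curve $y_i(t)$, Taylor expansion fails because $v_{xx}$ may jump, so on the mesh cells straddling $y_i(t_{k+1})$ I would instead use the crude pointwise bound \eqref{eq:epsn-bound-singular} from Remark~\ref{rem:not-well-defined}, controlling $\bar\eps^k_j$ by $\|v_t\|_\infty+\|v_x\|_\infty+\|v_{xx}\|_\infty$ on the relevant slice. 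The number of such ``bad'' cells per time step is $O(p)$ (bounded), each contributing measure $O(h)$, and the (A3) Hölder condition on $y_i$ ensures the singular location moves by at most $O(\dt^{\ma_0})$ between consecutive time levels, so the set of indices $k$ where a given spatial cell is ``bad'' is controlled. Splitting the double sum over $(k,j)$ into the regular region (where the $O(\dt^2+h^2)$ consistency bound applies, weighted by the (A2) powers $\eps^{-\ma_i}$) and the singular region (where the coarse bound applies on $O(h)$-measure strips, summed over $k$) is what generates each of the terms $h^2\tau^{1-2\ma_4}$, $h\tau^{1-2\ma_3}$, $\tau^2\tau^{1-2\ma_2}$, $(\tau^{\ma_0}+h)\tau^{1-2\ma_1}$ and the bare $\tau^{\ma_0}+h$.

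For the initialization terms, $\|e^0\|_2^2=0$ since $u^0_j=v_0(x_j)$ exactly. The term $h\|e^1\|_2^2$ must be bounded by a single BDF1 or CN step: with an Implicit Euler first step the local error is $O(\dt)$ in time, contributing a term like $\tau^{\ma_0}$ or absorbable into the existing terms (explaining why $\tau^2/h$ is \emph{not} needed in the IE case); with a Crank-Nicolson first step one instead incurs a term $\tau^2/h$, which is why (A4) is invoked and why that term appears only in the CN-initialized estimate. I would analyze the single CN step using Lemma~\ref{lem:2.3}$(i)$ together with (A4)'s piecewise-$C^2$ regularity of $v_0$ to control the one-step error in $L^2$, then trace how it propagates (it enters \eqref{eq:stab-estim-bis} undamped, hence with the $1/h$ scaling after the discrete-to-continuous conversion).

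The main obstacle I anticipate is the bookkeeping in the singular region: one must simultaneously track (a) which spatial cells are crossed by some $y_i(t)$ over the interval $[t_k,t_{k+1}]$, using (A3) to bound their number and total measure, and (b) the time-weight $\eps^{-\ma_i}\sim t_k^{-\ma_i}$ from (A2), making sure the resulting sums $\dt\sum_k t_k^{-2\ma_i}$ converge to the claimed powers $\tau^{1-2\ma_i}$ (and degenerate to $\ln\tau$ exactly when $\ma_i=1/2$, i.e.\ $\mb_i=0$, and require $\ma_2<1$ for summability of the $v_{tt}$ term). Reconciling the two distinct consistency bounds on the same index set and extracting the sharp exponents without double-counting is the delicate part; everything else is an application of Proposition~\ref{prop:stab-estim} plus routine Taylor estimates.
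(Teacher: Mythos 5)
Your proposal follows essentially the same route as the paper: Proposition~\ref{prop:stab-estim} with $e^0=0$, a regular/singular splitting of the consistency error with the (A2) growth weights and \eqref{eq:sumtk}, the (A3) H\"older bound \eqref{eq:boundIs} on the number of singular cells, and a separate one-step analysis of $h\|e^1\|^2$ that distinguishes IE from CN and traces the $\tau^2/h$ term to the CN step applying the discrete operator to the merely Lipschitz $v_0$. One imprecision is worth flagging: in the regular region you invoke the classical consistency bound $O(\dt^2\|v_{3t}\|_\infty + \dx^2(\|v_{3x}\|_\infty+\|v_{4x}\|_\infty))$, but (A2) only controls $v_{tt}$ and $v_{xxx}$ (the solution is just $C^{2,3}$ off the singular curves, with blow-up rates $\ma_2$, $\ma_4$), so neither $\|v_{3t}\|$ nor $\|v_{4x}\|$ is available; the paper instead uses the weaker first-order Taylor bounds $|\bar\eps^{n,1}_i|\leq C h\, t_{n+1}^{-\ma_4}$ and $|\bar\eps^{n,2}_i|\leq C\tau\, t_{n-1}^{-\ma_2}$, and it is exactly these (not second-order bounds) that produce the terms $h^2\tau^{1-2\ma_4}$ and $\tau^2\tau^{1-2\ma_2}$ after squaring, summing, and multiplying by $h$. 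Two smaller points you gloss over: the BDF2 time-consistency term at $n=1$ carries the weight $t_{n-1}^{-\ma_i}=t_0^{-\ma_i}$ and must be handled separately via the integral estimate \eqref{eq:special-estim} (the paper does this together with the first step), and the condition $\ma_2<1$ is needed precisely there, in the bound $\frac1\tau\int_{t_0}^{t_1}C_2 s^{-\ma_2}(t_1-s)\ds\leq C\tau^{1-\ma_2}$, rather than for the summability of $\tau\sum_k t_k^{-2\ma_2}$, which converges (to $\max(\tau^{1-2\ma_2},1)$ up to a constant) for any exponent. None of these affects the overall architecture, which matches the paper's proof.
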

}
{
In particular the scheme is convergent as soon as all factors in \eqref{eq:thm-error-estim} converge to $0$ as $(\tau,h)\conv 0$.

\begin{rem}\label{rem:thm-error-estim_applied_to_Model1}
In the case of the Model 1 presented in Section~\ref{sec:numericalresults},
for some $\ma\in(0,1)$ we will have $\ma_0=\ma$,
$\ma_1=1-\ma$,
$\ma_2=2-\ma$, $\ma_3=\ma $, $\ma_4=2\ma$.
Taking furthermore $\tau\equiv h$, the error estimate~\eqref{eq:thm-error-estim} is of order
\beno
  & & \hspace{-1cm} h^{3-2\ma_4} + h^{2-2\ma_3} + h^{3-2\ma_2} + h^{{1+\ma_0}-2\ma_1} +  h^{\ma_{{0}}} + {h} \\
  & \leq & C_1( h^{3-4\ma} + h^{2-2\ma} + h^{2\ma-1} + h^{\ma} + {h})\\
  & \leq & C_2( h^{{\min(3-4\ma,2-2\ma,2\ma-1)}})
\eeno
and does only give convergence for $\ma$ in $(\frac{1}{2},\frac{3}{4})$, with {a square} error estimate of order $O(h^{\min(3-4\ma,2\ma-1)})$.
\end{rem}
}

\begin{proof}[Proof of Theorem \ref{thm:error_estim}]

Let us first consider the approximation in the $x$ variable. {For $n=0,\dots,N-1$, let}
$\bar \eps^{n,1}$ be a consistency error in space, defined by
\be\label{eq:epsn1}
  {
  (A v^{n+1} + q^{n+1})_i  = (\cA v)(t_{n+1},x_i) + \bar\eps^{n,1}_i.
  }
\ee
If $(t_{n+1},x_i)$ corresponds to a singular point of $v$, we consider for $\cA v(t_{n+1},x_i)$
definitions of $v_t$ and $v_{xx}$ that satisfy the
bounds $|v_t|\leq \|v_t\|_{L^\infty}$ and $|v_{xx}|\leq \|v_{xx}\|_{L^\infty}${, see Remark~\ref{rem:not-well-defined}.}
In the region where $x\conv v(t_{n+1},x)$ is regular,
assuming that $v_{3x}(t_{n+1},.)$ is bounded on the interval
$[x_{i-1},x_{i+1}]$, by using Taylor expansions up to the $3$-{rd} order derivatives, it holds
$$
  |\bar\eps^{n,1}_i|= C_{4}t_{n+1}^{-\ma_4}\, O(h).
$$
On the contrary in a region $[x_{i-1},x_{i+1}]$ that may encounter a singularity $y_j(t)$,
we have no more than a bounded second order derivative ($v_{xx} \in L^\infty$).
By using
{$|(A v^{n+1} + q^{n+1})_i|\leq C(\|v_{xx}(t_{n+1},.)\|_{L^\infty}+\|v_{x}(t_{n+1},.)\|_{L^\infty}+\|v(t_{n+1},.)\|_{L^\infty})$},
we have
{
$$
  |\bar\eps^{n,1}_i |= C_{3} t_{n+1}^{-\ma_3} O(1).
$$
}
Moreover,
\begin{equation}\label{eq:cardsingindices}
   \mathrm{Card}\bigg\{i,\ [x_{i-1},x_{i+1}]\cap \{y_j(t_{n+1})\}_{1\leq j \leq p} \neq \emptyset \bigg\}\leq 3 p.
\end{equation}
{
Using that the number of regular terms is bounded by ${J}\leq C/h$,
we obtain
\be
  \|\bar \eps^{n,1}\|^2
   \, = \, \sum_i |\bar \eps^{n,1}_i|^2
 & =    & \sum_{i, regular} |\bar \eps^{n,1}_i|^2  +  \sum_{i, singular} |\bar \eps^{n,1}_i|^2  \\
 & \leq &  C \frac{1}{h} (C_{4} t_{n+1} ^{-\ma_4} h)^2  + C 3p\, (C_{3} t_{n+1}^{-\ma_3}  )^2 \\
 & \leq &  C h t_{n+1}^{-2\ma_4}   + C t_{n+1}^{-2\ma_3}\label{eq:epsbarnkomma1}
\ee
for some constant $C$.

Notice that for any $n\tau \leq T$, and $\tau$ sufficiently small, we have
\be
    & &  \tau \sum_{k=1}^n \frac{1}{t_k^\mb}
      \leq \bigg\{\barr{l}
        C\, \max(\tau^{1-\mb},1)\quad \mbox{for $\mb>0$, $\mb\neq 1$},      \\
        C\, |\ln(\tau)|\quad \mbox{for $\mb=1$},
      \earr
	\label{eq:sumtk}
\ee
where $C$ may depend on $T,\mb$ but is independent of $\tau,n$.

Therefore we obtain, for
$\ma_3,\ma_4\neq \frac{1}{2}$:
\be
  h \big( \tau \sum_{k=1}^{n-1} \|\bar \eps^{k,1}\|^2 )
    & \leq & C \bigg( h^2 \max(\tau^{1-2\ma_4},1)+ h \max(\tau^{1-2\ma_3},1) \bigg) \\
    & \leq & C \bigg( h^2 \tau^{1-2\ma_4} + h \tau^{1-2\ma_3}  + h \bigg) \label{eq:bbb1}
\ee
{(if $\ma_3=\frac{1}{2}$ or $\ma_4=\frac{1}{2}$ then the corresponding
term $\tau^{1-2\ma_i}$ should be replaced by $\ln(\tau)$).}
}

Now we consider the approximation {by BDF2} in time. Let $\bar\eps^{n,2}_i$ be such that, for $n\geq 1$:
\be\label{eq:epsn2}
  \frac{3 v^{n+1}_i - 4 v^n_i + v^{n-1}_{i}}{2\dt} = v_t(t_{n+1},x_i) + \bar \eps^{n,2}_i.
\ee
{
If $t\conv v(t,x_i)$ is regular on $[t_{n-1},t_{n+1}]$ with bounded $v_{{tt}}$ derivative, elementary Taylor expansions and (A2) give,
for $n\geq 2$ (the cases $n=0$ and $n=1$ will be treated separately):
$$
  |\bar \eps^{n,2}_i|\leq C \max_{[t_{n-1},t_{n+1}]}\|v_{{tt}}\|_{L^\infty(\mO)}\,\tau \leq C t_{n-1}^{-\ma_2} \tau,
$$
while otherwise in a singular region we have
$$
  |\bar \eps^{n,2}_i|\leq C \max_{[t_{n-1},t_{n+1}]}\|v_{t}\|_{L^\infty(\mO)}  \leq C t_{n-1}^{-\ma_1}.
$$
}

Let us introduce a set of singular indices as follows:
\be\label{eq:Js}
   {\cI^n_s}:=\bigg\{ i,\ x_i \in \bigcup_{j=1,\dots,p} y_j([t_{n-1},t_{n+1}])\bigg\}{,\qquad n=1,\dots,N-1}.
\ee
For $n\geq {1}$ and for $t\in \Theta_n:=[t_{n-1},t_{n+1}]$,
we get
$$
  |y_j(t)-y_j(t_{n+1})|\leq C(2\tau)^{\alpha_0}.
$$
So if $x_i\in y_j(\Theta_n)$ then $|x_i-y_j(t_{n+1})|\leq  C (2\tau)^{\ma_0}$.
Then, for any $A>0$,
the number of integers $i$ such that $x_i \in [-A+c,A+c]$ is bounded by $2A/h+1$.
Hence, for $n\geq {1}$,
we deduce a bound in the form
\be\label{eq:boundIs}
  \mathrm{Card}(\cI^n_s)\leq C \left(\frac{\tau^{\ma_0}}{h}  + 1\right)
\ee
for some constant $C\geq 0$.
{This bound holds also for \[\cI^0_s:=\bigg\{ i,\ x_i \in \bigcup_{j=1,\dots,p} y_j([t_{0},t_{1}])\bigg\},\] as $\cI^0_s\subset\cI^1_s$.}

Now we can bound the $\bar{\eps}^{n,2}$ terms, for $n\geq 2$, as follows. We have
{
\be
  \|\bar \eps^{n,2}\|^2
 & =    & \sum_{i \notin \cI^n_s} |\bar \eps^{n,2}_i|^2  +  \sum_{i\in \cI^n_s} |\bar \eps^{n,2}_i|^2  \\
 & \leq & C \sum_{i \notin \cI^n_s} (t_{n-1}^{-\ma_2}\,\tau)^2  +  \sum_{i\in \cI^n_s} (t_{n-1}^{-\ma_1})^2 \\
 & \leq & C \frac{1}{h} \tau^2\ t_{n-1}^{-2\ma_2}  + C ( \frac{\tau^{{\ma_0}}}{h} + 1 ) t_{n-1}^{-2\ma_1}.
\ee

Combining the previous bounds and using \eqref{eq:sumtk}, for $\ma_1,\ma_2\neq \frac{1}{2}$,
we obtain
\be
   h \big( \tau \sum_{k=2}^n \|\bar \eps^{k,2}\|^2 \big)
  & \leq & C \big( \tau^2\ \max(\tau^{1-2\ma_2},1) + (\tau^{\ma_0}+h) \max(\tau^{1-2\ma_1},1)\nonumber \\
  & \leq & C \big( \tau^{2}\tau^{1-2\ma_2} + (\tau^{\ma_0}+h) \tau^{1-2\ma_1} + \tau^{\ma_0} + h\big)
    \label{eq:bbb2}
\ee
{(powers of $\tau$ with exponent $1-2\ma_1=0$ or $1-2\ma_2=0$ need to be replaced by $\ln(\tau)$).}
}

Using the stability estimate \eqref{eq:stab-estim-bis} and the fact that $e^0=0$ {we obtain}
{
\be
  \|{\bar e}^n\|_{L^2(\mO)}^2
    & {=}   & h \|e^n\|^2  \\
    & \leq & C h \bigg( \|e^1\|^2 + \tau \sum_{k=1}^{n-1} \|\bar{\eps}^{k,1}\|^2  + \tau \sum_{k=1}^{n-1} \|\bar{\eps}^{k,2}\|^2 \bigg).
\ee
By using the estimates \eqref{eq:bbb1} and \eqref{eq:bbb2}, we obtain the desired error bound \eqref{eq:thm-error-estim} as long as we can bound
$\|\bar{\eps}^{1,2}\|^2$ (the first time-consistency error term that appears in the estimates for BDF2) as well as
$\|e^1\|^2$ (the {IE} {rsp.\ {CN}} scheme error) accordingly.
}

By using similar techniques as for BDF2, {and $e^0=0$,} it is easy to see that
\be
  h\|e^1\|^2 \leq {h\left(\frac{1}{1-s\tau\gamma} (\|e^0\| + \tau \|\bar{\eps}^0\|)\right)^2}
   =  {\frac{h\tau^2}{(1-\tau s\gamma)^2} \|\bar{\eps}^0\|^2 },
\ee
where $\bar{\eps}^0$ is the consistency error for the {IE} {(rsp.\ {CN})} scheme {and $s=1$ (rsp.\ $s=\frac12$)}.

{For both the {IE} and the {CN} scheme, we can write }$\bar{\eps}^0=\bar{\eps}^{0,1} + \bar{\eps}^{0,2}$ {where
$\bar{\eps}^{0,1}$ represents the spatial consistency error and $\bar{\eps}^{0,2}$ the time-consistency error given by
\be\label{eq:epsn2-0}
  \frac{v^{1}_i - v^0_i}{\dt} = v_t(t_{1},x_i) + \bar \eps^{0,2}_i.
\ee
}
The term $|v_t(t,x)|$ is not assumed to be bounded for $t=0^+$, but we have $|v_t(t_1,x)|\leq C \tau^{-\ma_1}$ since $t_1=\tau$ and
using (A2).
By using again (A2) we {obtain}
\begin{equation}\label{eq:special-estim}
  |\frac{1}{\tau}(v(t_{1},x_i)-v(t_0,x_i))-v_t(t_1,x_i)|
    =  \frac{1}{\tau}  |\!\int_{t_0}^{t_1} v_t(s,x_i)\ds|
    \leq \frac{1}{\tau} \int_{t_0}^{t_1} C s^{-\ma_1}\ds
    \leq C \tau^{-\ma_1}.
\end{equation}
This estimate holds for all $i$.
Therefore
$$
  |\bar{\eps}^{0,2}_i| \leq C \tau^{-\ma_1}.
$$
{
If $t\conv v(t,x_i)$ is regular on $[t_{0},t_{1}]$ with bounded $v_{{tt}}$ derivative, the estimate can be improved to
\begin{multline*}
   |\bar{\eps}^{0,2}_i| \leq|\bar{\eps}^{0,2}_i| |\frac{1}{\tau}(v(t_{1},x_i)-v(t_0,x_i))-v_t(t_1,x_i)-v_t(t_1,x_i)|\\
    =  \frac{1}{\tau}  |\int_{t_0}^{t_1}\int_{s}^{t_1}v_{{tt}}(u,x_i)\du\ds|
    \leq \frac{1}{\tau} \int_{t_0}^{t_1} C_2 s^{-\ma_2}(t_1-s)\ds
    \leq C \tau^{1-\ma_2}
\end{multline*}
as $\ma_2<1$.
}
In the end, we obtain a contribution to the error as follows {(to be multiplied by $\tau$)}:
{
\[
  h\tau\|\bar \eps^{0,2}\|^2
 =  h\tau\sum_{i \notin \cI^n_s} |\bar \eps^{0,2}_i|^2  +  h\tau\sum_{i\in \cI^n_s} |\bar \eps^{0,2}_i|^2
 \leq C \tau^{3-2\ma_2}  + C ( \tau^{\ma_0} + h) \tau^{1-2\ma_1}.
\]
}%
The same bound for $h\tau\|\bar{\eps}^{1,2}\|^2$ can be obtained by using similar estimates.

{For the {IE} scheme, the consistency error in space $\bar{\eps}^{0,1}$ can be bounded by \eqref{eq:epsbarnkomma1} with $n=0$, and in} conclusion we obtain the desired bound {for the {IE} scheme as starting scheme.}

{Now, it remains to bound the {spatial} consistency error $\bar{\eps}^{0,1}$ in the case that $u^1$ is computed by a {CN} scheme.
{We split $\bar{\eps}^{0,1}$ into two parts, $\bar{\eps}^{0,1}=\frac12\bar{\eps}^{0,1,1}+\frac12\bar{\eps}^{0,1,2}$, with
\be\label{eq:epsn1-v0}
  (A v_0 + q^0)_i  = (\cA v)(t_1,x_i) + \bar\eps^{0,1,1}_i,\qquad (A v^1 + q^1)_i  = (\cA v)(t_1,x_i) + \bar\eps^{0,1,2}_i.
\ee
$\bar\eps^{0,1,2}$ can be estimated by \eqref{eq:epsbarnkomma1} with $n=0$. To bound $\bar\eps^{0,1,1}$, we take advantage of assumption (A4) being true. Due to $v_0$ being Lipschitz regular, $A v_0 + q^0$ is bounded by $O(\frac{1}{h})$. Since $\cA v_0$ is also assumed to be bounded,
it results a bound of the form
$$
  |\bar\eps^{0,1,1}_i|\leq \frac{C}{h}.
$$}
If $v_0$ is $C^2$ regular on  $[x_{i-1},x_{i+1}]$, then, by standard estimates,
$$
{|(A v_0 + q^0)_i|\leq C \left(\|v''_0\|_{L^\infty}+\|v'_0\|_{L^\infty}+\|v_0\|_{L^\infty}\right)}.
$$
{This, together with $v_{{xx}}$ being bounded,} shows that $|\bar\eps^{0,1,1}_i|$ is bounded for such indices.
Summing up the estimates in the singular region (which {by \eqref{eq:cardsingindices}} involves {not more than $3p$} cases), and in the regular region (which involves
$O(\frac{1}{h})$ cases),
we obtain the bound
\beno
  \|\bar \eps^{0,1,1}\|^2
    & \leq & \sum_{i,\ \text{singular}} |\bar \eps^{0,1,1}_i|^2 + \sum_{i,\ \text{regular}} |\bar \eps^{0,1,1}_i|^2\\
    & \leq &  C (\frac{1}{h})^2 +  O(\frac{1}{h}) C  = O(\frac{1}{h^2}).
\eeno
{Altogether the} contribution {of $h\|e^1\|^2$} to the overall error {can be bounded by
\[
h\|e^1\|^2\leq C h\tau^2(\frac{1}{h^2}+h\tau^{-2\ma_4}+\tau^{-2\ma_3}+\frac1h\tau^{2-2\ma_2}  + \frac1h( \tau^{\ma_0} + h) \tau^{-2\ma_1}).
\]
}
}
\end{proof}

\section{Numerical results on two model test problems}\label{sec:numericalresults}

In this section we introduce two model test problems for diffusion with obstacle,  with source terms and analytic solutions, to better analyze the performance
of the proposed BDF schemes. A first problem mimics the American option problem with a jump in the $v_{xx}$ derivative
at a given singular position $x_s(t)$ that can be user-defined (in the numerical simulations, we will assume a $\sqrt{t}$ behavior for small times, see \eqref{eq:xs}).
The second problem allows for a bounded $v_{xxx}$ derivative with a jump at $x=x_s(t)$. These two models allow us to better check numerically
the performance of the BDF2 and BDF3 schemes, respectively. Without an analytic solution,
it is otherwise difficult to precisely compute a reference solution with very fine mesh.

\subsection{Two model test problems}
We first define two model test problems. In the case of \eqref{eq:1} we do {in general} not  know about exact
solutions.
Therefore {we construct} simple model obstacle problems
with explicit solutions (or solutions that can be easily computed with machine precision)
and also with the main features of the one-dimensional American option problem.

This is obtained by choosing an explicit function
$v=v(t,x)$ and adding a corresponding source term $f=f(t,x)$ to the original PDE {\eqref{eq:amer-pb}}, thus considering
\begin{equation}
  \min\bigg(v_t - \frac{\lambda^2}{2} x^2 v_{xx}- r x v_x + r v,\ v-{\varphi}(x)\bigg)
   =
  f(t,x).
\end{equation}

More precisely,
let $K$, $X_{max}$, $c_0$, $T$ {and $\alpha$} be given constants such that $0<K<X_{max}$, $c_0>0$, $T>0$,
{$\alpha\in(0,1]$} and such that $K-c_0\,T^{\alpha}>0$. Let ${\varphi}(x):=\max(K-x,0)$ denote the payoff function
and let $x_s$ be defined by
\be\label{eq:xs}
  x_s(t):=K(1-c_0\,t^\alpha).
\ee
{In the numerical experiments we will use $\alpha=\frac{1}{2}$, to be close to the American option case, even though the error estimate in Theorem \ref{thm:error_estim} only yields convergence for $\alpha\in(\frac12,\frac34)$ for the below Model 1, see Remark \ref{rem:thm-error-estim_applied_to_Model1}.}

We construct explicit functions $v(t,x)$ defined
for $x\in[0,X_{max}]$ and such that
\begin{itemize}
\item[$(i)$]
  $v(t,x)={\varphi}(x)= K-x$ for $x\leq x_s(t)$,
\item[$(ii)$]
  $v(t,x)>{\varphi}(x)=\max(K-x,0)$ for $x\in\,]x_s(t),X_{max}]$,
\item[$(iii)$]
  {for all $t\in(0,T]$, $v(t,\cdot)$} is at least $C^1$ on $[0,X_{max}]$,
\item[$(iv)$]
  $v(t,X_{max})=0$.
\end{itemize}
Note that requirement $(iii)$ implies $v_x(t,x_s(t))={\varphi}'(x_s(t))=-1$ for $t>0$.

\medskip

\paragraph{\bf Model 1.}
Let $v=v(t,x)$ be the function  defined by:
\be
  v(t,x):= \left\{ \barr{ll}
    {\varphi}(x)      & \mbox{for $x<x_s(t)$}\\
    {\varphi}(x_s(t))- \disp \frac{x-x_s(t)}{ 1 {+} (x-x_s(t))/C(t)} & \mbox{otherwise}
    \earr \right.
\ee
where $C(t)>0$ is a constant such that $v(t,X_{max})=0$:
$$
  {C(t) := \bigg(\frac{1}{{\varphi}(x_s(t))} - \frac{1}{X_{max}-x_s(t)}\bigg)^{-1}.}
$$
Then the requirements $(i)-(iv)$ are satisfied.

\medskip

\paragraph{\bf Model 2.}
Let $v=v(t,x)$ be the function  defined by:
\be
  v(t,x):= \left\{ \barr{ll}
    {\varphi}(x)      & \mbox{for $x<x_s(t)$}\\
    {\varphi}(x_s(t))-C(t) \atan\bigg(\disp \frac{x-x_s(t)}{C(t)}\bigg) & \mbox{otherwise}
    \earr \right.
\ee
for a given $C(t)>0$. Notice that $v(t,x)$ is a non-increasing function of the variable~$x$.
This function will satisfy requirements $(i)-(iv)$ if furthermore $C(t)$ is such that
\be\label{eq:Cconst}
  \frac{{\varphi}(x_s(t))}{C(t)} = \atan \left(\disp \frac{X_{max}-x_s(t)}{C(t)}\right).
\ee
Letting $a:=X_{max}-x_s(t)$ and $b={\varphi}(x_s(t))=K-x_s(t)$ it is clear that $0<b<a$ and therefore there exists
a unique $\mt>0$ such that $b\mt=\atan(a\mt)$.
This value can be numerically obtained by using a fixed-point method.
We then define $C(t):=1/\mt$ to obtain a solution of \eqref{eq:Cconst}. Therefore the function $v$ is in explicit
form but for the computation of the $C(t)$ function which can be computed to arbitrary precision.

\begin{rem}
For Model 2, in order to compute $v_t(t,x)$ the {derivative $\dot C(t)$} is needed.
Denoting $a=a(t)=X_{max}-x_s(t)$ and $b=b(t)={\varphi}(x_s(t))$, and $\mt=\mt(t)=1/C(t)$, by derivation of
$b\mt=\atan(a\mt)$
we obtain $\dot C/C = - \dot\mt/\mt = (q \dot b-\dot a)/(q b-a)$ where $q=1+(a/C)^2$,
with $\dot a= -\dot x_s(t)$ and $\dot b={\varphi}'(x_s(t))\dot x_s(t)$.
\end{rem}

\begin{figure}[!hbtp]
\begin{center}

\mbox{\includegraphics[width=0.56\textwidth]{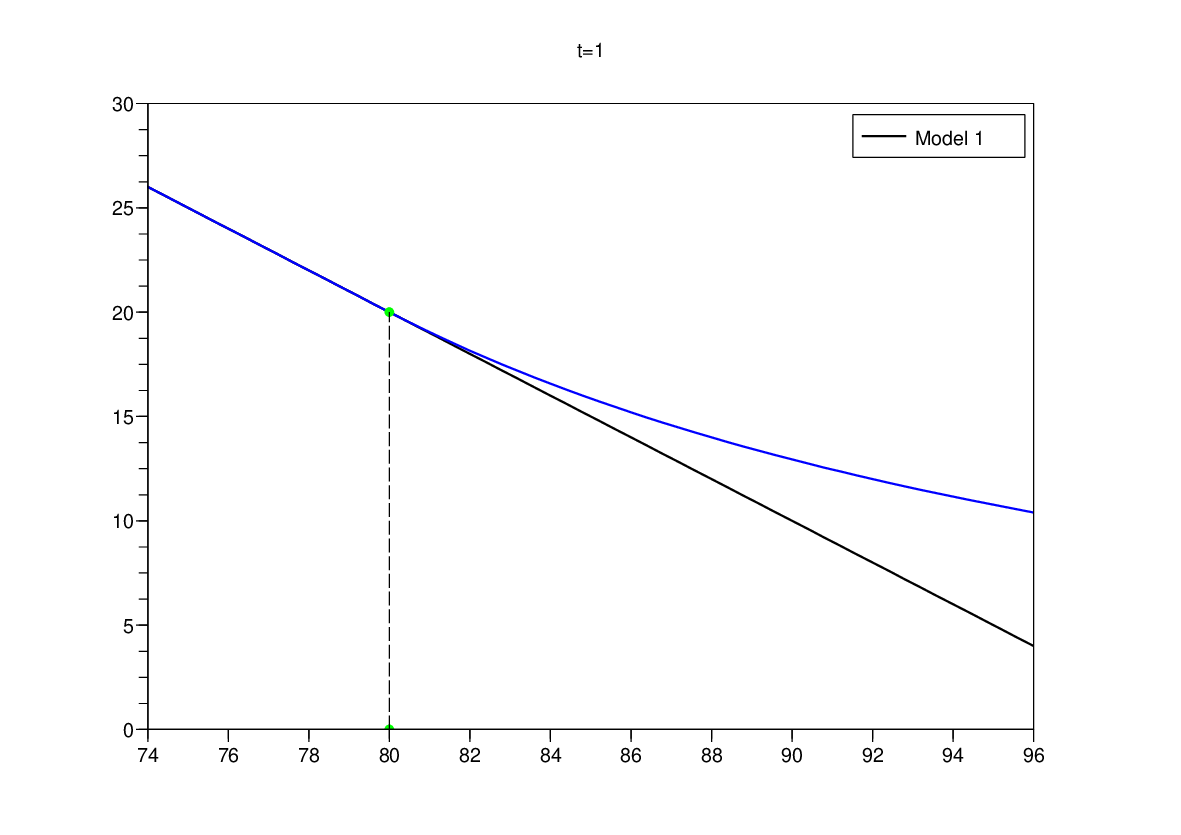}
\hspace*{-0.5cm}
\includegraphics[width=0.55\textwidth]{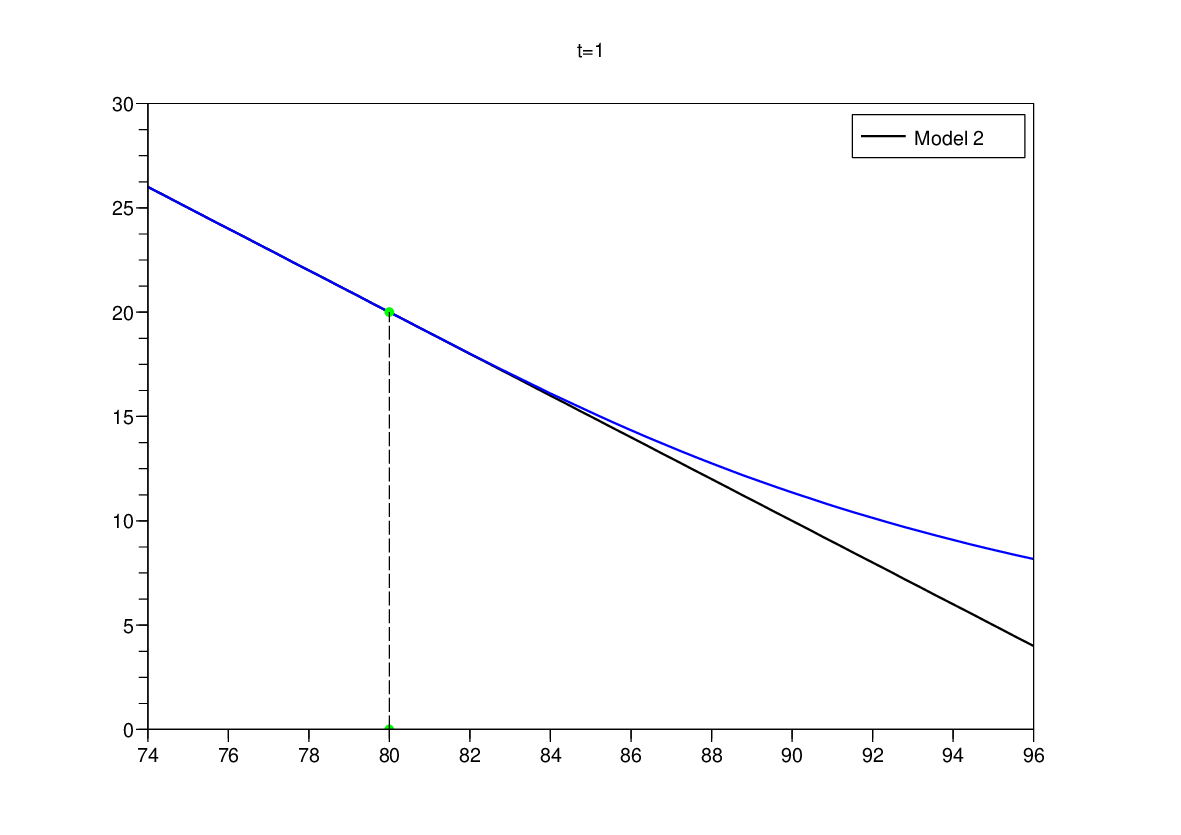}
}
\caption{Zooming around the singular point $(x_s,{\varphi}(x_s))$ for model 1 (left) and 2 (right).\label{fig:zoom}}
\end{center}
\end{figure}

\begin{figure}[!hbtp]
\includegraphics[width=0.9\textwidth,height=0.25\textheight]{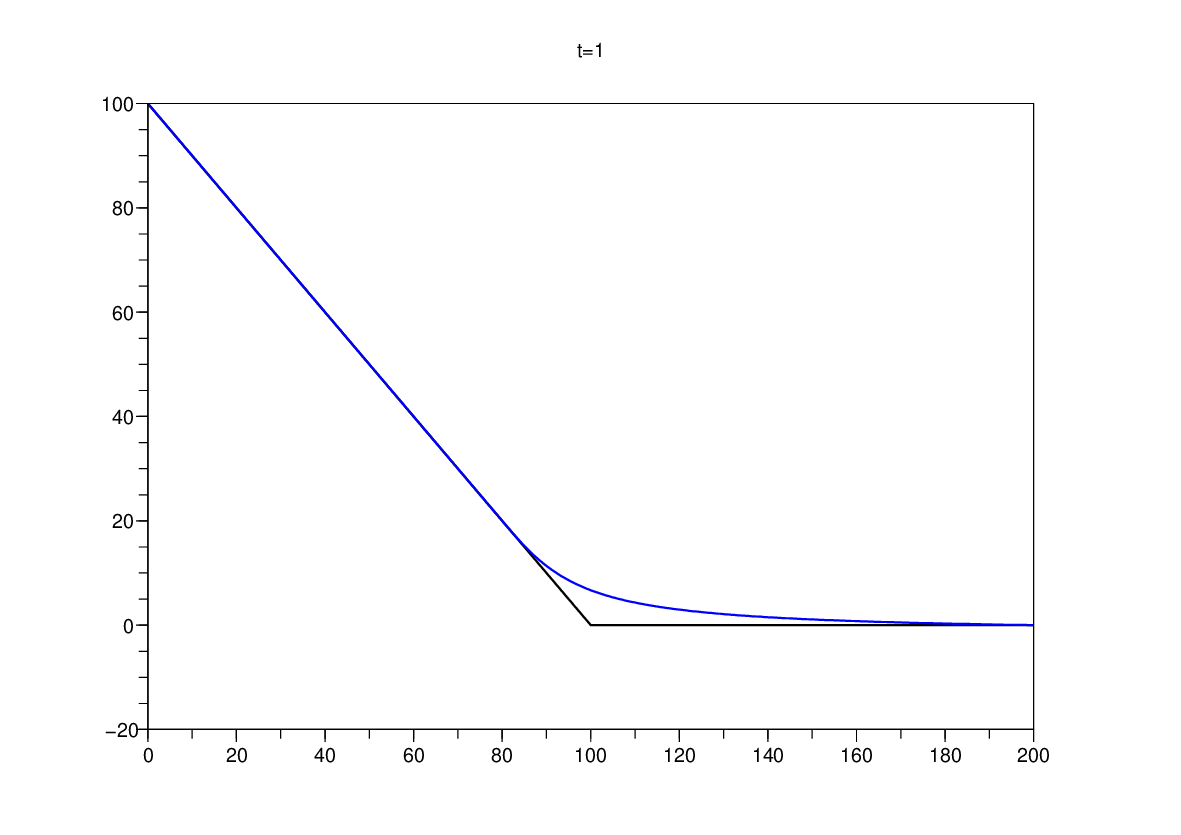}
\caption{Model 2.\label{fig:model}}
\end{figure}

\begin{rem}
The main difference between the two models is the regularity of the data near the singularity $x=x_s(t)$.
More precisely, for the first model
there is a jump in the second derivative: $v$ is of class $C^1$ and $v_{xx}$ is discontinuous.
For the second model, $v$ is of class $C^2$ and there is a jump in the third derivative $v_{3x}$.
\end{rem}

\subsection{A 4th order approximation of the spatial operator \texorpdfstring{$\cA$}{}}\label{sec:5.2}
We furthermore introduce a 4th order numerical matrix approximation of the $\cA$ operator in order to better observe the
time discretization error.

Let $D^2 u_j := \frac{-u_{j-1} + 2 u_j - u_{j+1}}{\dx^2}$.
By using Taylor expansions, if $u_j=u(x_j)$, we have
$$
  -u_{xx}(x_j) = \frac{-u_{j-1} + 2 u_j - u_{j+1}}{\dx^2}
  + \frac{1}{12\dx^2} (u_{j-2} - 4 u_{j-1} + 6 u_j - 4 u_{j+1} + u_{j+2}) + O(h^4)
$$
and
$$
   u_{x}(x_j) = \frac{u_{j+1} - u_{j-1}}{2\dx}
     + \frac{1}{12\dx} (u_{j-2} - 2 u_{j-1} + 2 u_{j+1} - u_{j+2}) + O(h^4).
$$
Therefore we have a 4-$th$ order approximation of the spatial derivatives, {and will denote by $\tilde A$ (instead of $A$)
the corresponding approximation matrix.}
At the boundaries, {for the American option problem} we use $u_{-1}=K-X_{min}-h$, $u_0=K-X_{min}$, and $u_{J+1}=u_{J+2}=0$
(the left boundary condition is consistent with fourth order because we expect that
$v(t,x)={\varphi}(x)=K-x$ near the left boundary,
also the right boundary condition is consistent with the fact that the exact solution $v(t,x)$ (obtained for the case $X_{\max}= \infty$)
decays faster than any polynomial as $x\conv \infty$). {For our model problems, we will use the known exact solution values at the boundaries.}

The BDF2 and BDF3 schemes are otherwise unchanged concerning the time discretization.
Hence for $\dt\equiv c\ \dx$ we expect to
mainly see the error of the time discretization.
In particular we aim to observe, whenever possible, second or third order behavior in time.

{
The following lemma shows that the coercivity property of Lemma~\ref{lem:4.6} extends to the 4$th$ order
approximation:

\begin{lemma}\label{lem:5xx}
Under assumption (A1),
there exist $\eta_2>0$ and $\mg_2\geq 0$ such that
for all $x\in\R^J$:
\be
  \<x,\tilde A x\> \geq \eta_2 N(x/h)^2 - \mg_2 \|x\|_2^2.
\ee
\end{lemma}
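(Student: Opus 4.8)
The plan is to reduce the estimate to Lemma~\ref{lem:4.6} by writing the fourth-order matrix as the second-order matrix plus two correction matrices and controlling each correction on its own. From the stencils of Section~\ref{sec:5.2} one has $\tilde A = A + R_{\mathrm{diff}} + R_{\mathrm{adv}}$, where, with $a_i=a(x_i)$ and $b_i=b(x_i)$,
\[
  (R_{\mathrm{diff}} x)_i = \frac{a_i}{12h^2}\,(x_{i-2}-4x_{i-1}+6x_i-4x_{i+1}+x_{i+2}),\qquad
  (R_{\mathrm{adv}} x)_i = \frac{b_i}{12h}\,(x_{i-2}-2x_{i-1}+2x_{i+1}-x_{i+2}).
\]
Since Lemma~\ref{lem:4.6} already provides $\<x,Ax\>\geq \eta N(x/h)^2-\mg\|x\|_2^2$, it suffices to bound each correction from below by $-\tfrac{\eta}{4}N(x/h)^2 - C\|x\|_2^2$ and then absorb; this yields the claim with $\eta_2=\eta/2$ and a suitable $\mg_2$.

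The key observation for $R_{\mathrm{diff}}$ is that the fourth difference is the square of the second difference: writing $(\Delta x)_i:=x_{i+1}-2x_i+x_{i-1}$ (a symmetric operator), the stencil equals $(\Delta^2 x)_i$. A discrete summation by parts, splitting $a_{i\pm1}=a_i+(a_{i\pm1}-a_i)$, then gives
\[
  \<x,R_{\mathrm{diff}} x\> = \frac{1}{12h^2}\Big(\sum_i a_i (\Delta x)_i^2 + \sum_i\big[(a_{i+1}-a_i)x_{i+1}+(a_{i-1}-a_i)x_{i-1}\big](\Delta x)_i\Big).
\]
The frozen-coefficient term $\frac{1}{12h^2}\sum_i a_i(\Delta x)_i^2$ is a discrete bi-Laplacian form and is nonnegative (indeed $\geq \tfrac{\eta_0}{12h^2}\sum_i(\Delta x)_i^2\geq0$ since $a_i\geq\eta_0$). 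For the remainder I would use the Lipschitz bound $|a_{i\pm1}-a_i|\leq Lh$ from (A1), Cauchy--Schwarz, and the elementary inequality $\sum_i(\Delta x)_i^2\leq 4\sum_j|x_j-x_{j-1}|^2 = 4h^2 N(x/h)^2$, to obtain a bound of the form $CL\,\|x\|_2\,N(x/h)$, which Young's inequality absorbs into $\tfrac{\eta}{4}N(x/h)^2 + C\|x\|_2^2$.

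For $R_{\mathrm{adv}}$ the stencil $(1,-2,0,2,-1)$ is antisymmetric, so the frozen-coefficient form vanishes and I would bound it directly. Rewriting $x_{i-2}-2x_{i-1}+2x_{i+1}-x_{i+2} = -(x_{i+2}-x_{i+1})+(x_{i+1}-x_i)+(x_i-x_{i-1})-(x_{i-1}-x_{i-2})$ as a combination of four consecutive first differences, Cauchy--Schwarz gives $|\<x,R_{\mathrm{adv}} x\>|\leq C\|b\|_\infty N(x/h)\|x\|_2$, again absorbed by Young. Combining the three estimates and choosing the Young parameters so that the total $N(x/h)^2$ mass removed stays below $\eta/2$ yields the lemma. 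The only technical nuisance---not a conceptual obstacle---is the boundary contributions in the summation by parts, as the wide stencils reach the indices $i=-1,0,J+1,J+2$. Consistently with the definition of $\tilde A$ on $\R^J$ these ghost values are taken to be $0$ (the genuine boundary data enter the vector $q$, not the matrix), so the boundary terms involve only finitely many of $x_1,x_2,x_{J-1},x_J$ and are harmlessly absorbed into $C\|x\|_2^2$.
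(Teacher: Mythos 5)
Your proof is correct and follows essentially the same route as the paper: both split $\tilde A$ into the second-order matrix $A$ (handled by Lemma~\ref{lem:4.6}) plus the fourth-order correction, both recognize the stencil $(1,-4,6,-4,1)$ as the square of the second-difference operator (your summation by parts with zero ghost values plays the role of the paper's factorization $B_0=A_0^2+E$ with $E$ a nonnegative corner matrix), and both control the variable-coefficient discrepancy through the Lipschitz bound on $a$ (a commutator estimate) followed by a Young absorption. The only notable difference is that you bound the advection correction $R_{\mathrm{adv}}$ explicitly, where the paper simply remarks that those terms \enquote{can be bounded as before}.
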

\begin{proof}
Let us focus on the matrix part that concerns the approximation of the diffusion  $-a(x) u_{xx}(x)$,
where we have removed the time dependency to simplify the notation.
The other terms coming from the drift term $b(x) u_x$ and the term $r u$ can be bounded as before.
The new matrix reads as follows:
$$\tilde A:= A + B$$
where $A$ stands for the second order approximation of the diffusion term, i.\,e.,
$$A:= \frac{1}{h^2} \pdiag(-a_j, 2a_j, -a_j) \equiv \frac{1}{h^2} \Delta A_0 $$
and
$$ B:=\frac{1}{12h^2} \pdiag( a_j,\ -4a_j,\ 6aj, -4 a_j, a_j)\equiv\frac{1}{12h^2}\Delta B_0  ,$$
where $a_j=a(x_j)$, and $\pdiag$ stands for p-band diagonal matrices (a tridiagonal matrix for $A$,
rsp.\ a pentadiagonal matrix for $B$),
and where we have also denoted $A_0:=\pdiag(-1,2,-1)$, $B_0:=\pdiag(1,-4,6,-4,1)$,
and $\Delta:=\diag(a_j)$ (a diagonal matrix with $\Delta_{jj}=a_j$).

First notice that
$$ B_0= A_0^2  +
  \begin{pmatrix}
    1 & 0 & \dots & 0 \\
    0 & 0 & \dots & 0 \\
    \vdots & \vdots & \dots & \vdots \\
    0 & \dots & 0 & 1
  \end{pmatrix}
$$
and therefore $\<B_0 x,x\>\geq \<A_0^2 x,x\>$, and, since $\Delta \geq 0$ and diagonal,
$$
  \<Bx,x\> \geq \<\frac{1}{12 h^2} \Delta A_0^2 x,x\>. 
$$

Because $x\conv a(x)$ is assumed Lipschitz continuous, we have
$$ \max_{i,j} |(\Delta A_0 - A_0 \Delta)_{ij})|_\infty  = O(h). $$
As the matrix $\Delta A_0 - A_0 \Delta$ is tridiagonal, it follows
$$ \| \Delta A_0 x - A_0 \Delta x\|\leq C h \|x\|. $$
Therefore we have also
\be
   \<\Delta A_0^2 x, x\>
     & = &  \<A_0 \Delta A_0 x, x\> + \<(\Delta A_0-A_0 \Delta )A_0 x, x\> \\
     & = &  \<\Delta A_0 x, A_0 x\> + \<(\Delta A_0-A_0 \Delta )A_0 x, x\> \\
     & \geq &  \eta_0 \|A_0 x\|^2 - C h \|A_0 x\| \|x\| \\
     & \geq &  \eta_0 \|A_0 x\|^2 - \frac{\eta_0}{2} \|A_0 x\|^2 - \frac{C^2 h^2}{2\eta_0} \|x\|^2\\
     & \geq &  \frac{\eta_0}{2} \|A_0 x\|^2 - \frac{C^2 h^2}{2\eta_0} \|x\|^2\geq - \frac{C^2 h^2}{2\eta_0} \|x\|^2
\ee
and
\be
   \frac{1}{h^2}\<\Delta A_0^2 x, x\>
     & \geq &   - \frac{C^2}{2\eta_0} \|x\|^2.
\ee
Thus we have a lower bound for $\<Bx,x\>$ of the desired type,
i.\,e.\ $\<Bx,x\> \geq - C'\|x\|^2$, and the lower bound for $\<\tilde A x,x\>$ will be of the same
type as for $\<A x,x\>$.
\end{proof}
}
{Consequently, Theorem \ref{thm:error_estim} extends to the 4th order approximation.}

\subsection{Numerical results for models 1 and 2}\label{sec:num:res1-2}

{
From now on the parameters used are $\lambda=0.3$, $r=0.1$, $K=100$.
The singularity motion is defined by $x_s(t)=K(1-c_0 \sqrt{t})$ with $c_0=0.2$. 

The errors in $L^2$, $L^1$ and $L^\infty$ norms are computed at time $t_N=T$ {using \eqref{discreteLperrordef}.}
For Model 1, the numerical domain is defined by $\mO=(75,275)$ (i.\,e., $X_{\min}=75$ and $X_{\max}=275$), and $T=1$.
Therefore the singularity at $T=1$ is located at $x_s(T)=80$.
Numerical results for Model 1 with the CN and BDF2 schemes {and second order spatial discretization (results for fourth order approximation in space are similar)} are given in
Table~\ref{tab:MODEL1-CN} and
Table~\ref{tab:MODEL1-BDF2}.

For Model 1, with bounded $v_{xx}$ derivative,
we observe that the order of the CN scheme, {when $N=J{+1}$, is two in the $L^1$ and $L^2$ norm (and around $1.6$ in the $L^\infty$ norm)}
and goes down to order one {in the $L^\infty$ norm} when $N={(}J{+1)}/10$ (i.\,e.\ the mesh ratio $\dt/h$ is large).
On the contrary, the BDF2 scheme keeps roughly an error of order two for different mesh ratios and for all norms.

\begin{rem}\label{rem:firststep}
For models 1 and 2, we have observed that for the first step of BDF2, using the BDF1 scheme
  (i.\,e., the {IE} scheme) instead of CN {yields nearly unchanged results} {(this is different to the American option problem,
  see Remark~\ref{rem:technical_details})}.
\end{rem}

\begin{rem}\label{rem:BDF3-MODEL1}
We have also tested the BDF3 scheme on Model 1, and the numerical results are {(both for second and fourth order approximation in space)} very similar to the BDF2 scheme,
and in particular the numerical order is not greater than two.
This comes from the fact that the solution has a bounded second order derivative, with a jump.
\end{rem}
}

\begin{table}[!hbtp]
\begin{center}
\begin{tabular}{|cc|cc|cc|cc|c@{\hspace{3pt}}|}
\hline
\multicolumn{2}{|c|}{Mesh}
   & \multicolumn{2}{|c|}{Error $L^1$} & \multicolumn{2}{|c|}{Error $L^2$} & \multicolumn{2}{|c|}{Error $L^\infty$}
   &  time(s)
   \\
   \hline
  $J{+1}$ & $N$   &  error   &  order &    error & order  & error    & order   &         \\
\hline \hline
    80 &     80 &{1.09e+00} &{ 1.79 }& {1.67e-01}& {1.75 }& {3.89e-02 }& {1.64 }& { 0.04}\\
   160 &    160 &{2.85e-01} &{ 1.94 }& {4.53e-02}& {1.88 }& {1.14e-02 }& {1.77 }& { 0.08}\\
   320 &    320 &{8.41e-02} &{ 1.76 }& {1.36e-02}& {1.73 }& {3.63e-03 }& {1.65 }& { 0.22}\\
   640 &    640 &{2.12e-02} &{ 1.99 }& {3.51e-03}& {1.95 }& {9.94e-04 }& {1.87 }& { 0.53}\\
  1280 &   1280 &{5.68e-03} &{ 1.90 }& {9.46e-04}& {1.89 }& {2.76e-04 }& {1.85 }& { 1.40}\\
  2560 &   2560 &{1.41e-03} &{ 2.00 }& {2.40e-04}& {1.98 }& {7.36e-05 }& {1.91 }& { 4.52}\\
  5120 &   5120 &{3.65e-04} &{ 1.96 }& {6.23e-05}& {1.95 }& {1.95e-05 }& {1.92 }& {17.04}\\
 10240 &  10240 &{9.08e-05} &{ 2.01 }& {1.57e-05}& {1.99 }& {7.00e-06 }& {1.48 }& {64.24}\\
\hline\hline
    80 &      8 &{5.07e+00} &{ 1.61 }& {1.02e+00}& {1.39 }& {4.35e-01 }& {0.69 }& { 0.00}\\
   160 &     16 &{8.93e-01} &{ 2.51 }& {1.83e-01}& {2.48 }& {9.27e-02 }& {2.23 }& { 0.01}\\
   320 &     32 &{3.00e-01} &{ 1.57 }& {6.83e-02}& {1.42 }& {4.34e-02 }& {1.09 }& { 0.03}\\
   640 &     64 &{6.12e-02} &{ 2.29 }& {1.79e-02}& {1.93 }& {2.10e-02 }& {1.05 }& { 0.06}\\
  1280 &    128 &{1.79e-02} &{ 1.77 }& {5.86e-03}& {1.61 }& {1.01e-02 }& {1.06 }& { 0.18}\\
  2560 &    256 &{4.17e-03} &{ 2.10 }& {1.86e-03}& {1.66 }& {4.94e-03 }& {1.03 }& { 0.58}\\
  5120 &    512 &{1.11e-03} &{ 1.91 }& {6.21e-04}& {1.58 }& {2.38e-03 }& {1.06 }& { 2.25}\\
 10240 &   1024 &{2.73e-04} &{ 2.02 }& {2.15e-04}& {1.53 }& {1.19e-03 }& {1.00 }& { 8.23}\\
\hline
\end{tabular}
\end{center}
\caption{
\label{tab:MODEL1-CN}
(Model 1)
CN scheme for \eqref{eq:1} (using {2nd} order spatial approximation),
with different mesh ratio{s} $N=J{+1}$, $N={(}J{+1)}/10$.}
\end{table}

\begin{table}[!hbtp]
\begin{center}
\begin{tabular}{|cc|cc|cc|cc|c|}
\hline
\multicolumn{2}{|c|}{Mesh}
   & \multicolumn{2}{|c|}{Error $L^1$} & \multicolumn{2}{|c|}{Error $L^2$} & \multicolumn{2}{|c|}{Error $L^\infty$}
   &  time(s)
   \\
   \hline
  $J{+1}$ & $N$   &  error   &  order &    error & order  & error    & order   &         \\
\hline \hline
    80 &     80 &{1.27e+00} &{ 1.87 }& {2.03e-01}& {1.77 }& {7.15e-02 }& {1.39 }& { 0.02}\\
   160 &    160 &{3.59e-01} &{ 1.82 }& {5.89e-02}& {1.78 }& {2.50e-02 }& {1.51 }& { 0.05}\\
   320 &    320 &{9.18e-02} &{ 1.97 }& {1.55e-02}& {1.93 }& {8.17e-03 }& {1.61 }& { 0.12}\\
   640 &    640 &{2.39e-02} &{ 1.94 }& {4.06e-03}& {1.93 }& {2.52e-03 }& {1.70 }& { 0.32}\\
  1280 &   1280 &{5.97e-03} &{ 2.00 }& {1.03e-03}& {1.98 }& {7.35e-04 }& {1.78 }& { 0.96}\\
  2560 &   2560 &{1.51e-03} &{ 1.98 }& {2.60e-04}& {1.98 }& {2.06e-04 }& {1.84 }& { 3.25}\\
  5120 &   5120 &{3.76e-04} &{ 2.01 }& {6.49e-05}& {2.00 }& {5.58e-05 }& {1.88 }& {11.56}\\
 10240 &  10240 &{9.43e-05} &{ 1.99 }& {1.63e-05}& {2.00 }& {1.48e-05 }& {1.91 }& {43.10}\\
\hline\hline
    80 &      8 &{1.83e+00} &{ 2.57 }& {4.24e-01}& {2.10 }& {1.93e-01 }& {1.76 }& { 0.00}\\
   160 &     16 &{3.81e-01} &{ 2.26 }& {9.19e-02}& {2.21 }& {5.32e-02 }& {1.86 }& { 0.01}\\
   320 &     32 &{7.93e-02} &{ 2.27 }& {2.03e-02}& {2.18 }& {1.44e-02 }& {1.89 }& { 0.01}\\
   640 &     64 &{1.96e-02} &{ 2.02 }& {4.71e-03}& {2.11 }& {3.81e-03 }& {1.92 }& { 0.04}\\
  1280 &    128 &{4.47e-03} &{ 2.13 }& {1.05e-03}& {2.17 }& {9.90e-04 }& {1.94 }& { 0.12}\\
  2560 &    256 &{1.13e-03} &{ 1.98 }& {2.48e-04}& {2.08 }& {2.55e-04 }& {1.96 }& { 0.40}\\
  5120 &    512 &{2.72e-04} &{ 2.06 }& {5.74e-05}& {2.11 }& {6.50e-05 }& {1.97 }& { 1.51}\\
 10240 &   1024 &{6.99e-05} &{ 1.96 }& {1.41e-05}& {2.02 }& {1.65e-05 }& {1.98 }& { 5.15}\\
\hline
\end{tabular}
\end{center}
\caption{
\label{tab:MODEL1-BDF2}
(Model 1)
BDF2 scheme (using {2nd} order spatial approximation), for different mesh ratios.}
\end{table}


{
Then we focus on numerical results for Model 2. We have tested again the CN, BDF2 and BDF3 schemes.
In that case we consider the problem with $\mO=(50,450)$ and $T=0.5$, the other parameters being as in Model~1.

Results for CN and BDF3 schemes are given
in Tables~\ref{tab:MODEL2-CN} and ~\ref{tab:MODEL2-BDF3} respectively.
For this model, by construction, we recall that the exact solution has bounded third order spacial derivatives.
The CN scheme gives good results when $N=J{+1}$ (second order convergence), but goes back to first order convergence when $N={(}J{+1)}/10$
(in the $L^\infty$ norm). The results for the BDF2 scheme, which are not shown, demonstrate second order convergence but unconditionally on the mesh parameters.
On the other hand the BDF3 scheme shows at least
third order convergence for the $L^\infty$ norm, as well for both ratios of the mesh parameters.

In conclusion, for the type of obstacle problems studied here,
we advise using the BDF2 scheme instead of the CN scheme because it keeps its expected numerical order
unconditionally on the mesh parameters.
}

\begin{table}[!hbtp]
\begin{center}
\begin{tabular}{|cc|cc|cc|cc|c|}
\hline
\multicolumn{2}{|c|}{Mesh}
   & \multicolumn{2}{|c|}{Error $L^1$} & \multicolumn{2}{|c|}{Error $L^2$} & \multicolumn{2}{|c|}{Error $L^\infty$}
   &  time(s)
   \\
   \hline
  $J{+1}$ & $N$   &  error   &  order &    error & order  & error    & order   &         \\
\hline\hline
     80 &    80  & 8.04{e}-01 &  3.14  & 2.13{e}-01 &   2.56 & 8.32{e}-02 &  1.67 & { 0.16}\\ 
    160 &   160  & 1.03{e}-01 &  2.96  & 2.14{e}-02 &   3.31 & 7.25{e}-03 &  3.52 & { 0.26}\\ 
    320 &   320  & 8.46{e}-03 &  3.61  & 1.64{e}-03 &   3.71 & 4.63{e}-04 &  3.97 & { 0.74}\\ 
    640 &   640  & 3.11{e}-04 &  4.77  & 5.80{e}-05 &   4.82 & 1.31{e}-05 &  5.14 & { 1.33}\\ 
   1280 &  1280  & 6.26{e}-06 &  5.64  & 1.29{e}-06 &   5.50 & 5.35{e}-07 &  4.61 & { 3.23}\\ 
   2560 &  2560  & 5.81{e}-07 &  3.43  & 1.22{e}-07 &   3.40 & 4.06{e}-08 &  3.72 & { 7.99}\\ 
   5120 &  5120  & 1.43{e}-07 &  2.02  & 2.96{e}-08 &   2.04 & 8.70{e}-09 &  2.22 & {22.41}\\ 
  10240 & 10240  & 3.5{7}e-08&  2.01  & 7.40{e}-09 &   2.00 & 2.17{e}-09 &  2.00 & {64.65}\\ 
\hline\hline
     80 &     8  & 8.07{e}-01 &  3.33  & 2.20{e}-01 &   2.70 & 8.83{e}-02 &  1.76 &{ 0.01}\\ 
    160 &    16  & 1.44{e}-01 &  2.49  & 3.37{e}-02 &   2.71 & 1.40{e}-02 &  2.66 &{ 0.02}\\ 
    320 &    32  & 1.48{e}-02 &  3.28  & 3.34{e}-03 &   3.34 & 1.36{e}-03 &  3.37 &{ 0.06}\\ 
    640 &    64  & 1.04{e}-03 &  3.83  & 3.62{e}-04 &   3.21 & 2.36{e}-04 &  2.52 &{ 0.14}\\ 
   1280 &   128  & 2.50{e}-04 &  2.06  & 8.43{e}-05 &   2.10 & 7.91{e}-05 &  1.58 &{ 0.28}\\ 
   2560 &   256  & 7.81{e}-05 &  1.68  & 3.30{e}-05 &   1.36 & 4.02{e}-05 &  0.98 &{ 0.71}\\ 
   5120 &   512  & 2.02{e}-05 &  1.95  & 1.09{e}-05 &   1.60 & 1.97{e}-05 &  1.03 &{ 2.05}\\ 
  10240 &  1024  & 5.27{e}-06 &  1.94  & 3.80{e}-06 &   1.52 & 1.01{e}-05 &  0.97 &{ 6.52}\\ 
\hline
\end{tabular}
\end{center}
\caption{
\label{tab:MODEL2-CN}
(Model 2)
CN scheme for \eqref{eq:1} (using 4th order spatial approximation).}
\end{table}

\begin{table}[!hbtp]
\begin{center}
\begin{tabular}{|cc|cc|cc|cc|c|}
\hline
\multicolumn{2}{|c|}{Mesh}
   & \multicolumn{2}{|c|}{Error $L^1$} & \multicolumn{2}{|c|}{Error $L^2$} & \multicolumn{2}{|c|}{Error $L^\infty$}
   &  time(s)
   \\
   \hline
  $J{+1}$ & $N$   &  error   &  order &    error & order  & error    & order   &         \\
\hline\hline
     80 &    80  & 8.07{e}-01 &  3.13  & 2.13{e}-01 &   2.55 & 8.33{e}-02 &  1.67 &{ 0.09}\\
    160 &   160  & 1.01{e}-01 &  2.99  & 2.10{e}-02 &   3.34 & 7.12{e}-03 &  3.55 &{ 0.18}\\
    320 &   320  & 8.30{e}-03 &  3.61  & 1.60{e}-03 &   3.71 & 4.52{e}-04 &  3.98 &{ 0.40}\\
    640 &   640  & 3.04{e}-04 &  4.77  & 5.67{e}-05 &   4.82 & 1.29{e}-05 &  5.13 &{ 0.93}\\
   1280 &  1280  & 7.27{e}-06 &  5.38  & 1.40{e}-06 &   5.34 & 4.86{e}-07 &  4.73 &{ 2.17}\\
   2560 &  2560  & 1.34{e}-07 &  5.77  & 4.43{e}-08 &   4.98 & 2.46{e}-08 &  4.30 &{ 5.53}\\
   5120 &  5120  & 1.1{3}e-08&3.5{7}& 2.8{0}e-09& 3.9{9}& 1.41e-09& 4.1{2}& {18.26}\\
  10240 &  10240 &{1.07e-09} &{3.40}& {2.13e-10}& {3.72}& {7.88e-11}& 4.{16} & {49.88}\\
\hline\hline
     80 &     8  & 1.02E+00 &  2.74  & 2.41{e}-01 &   2.38 & 9.12{e}-02 &  1.84 & { 0.01}\\
    160 &    16  & 6.54{e}-02 &  3.96  & 1.69{e}-02 &   3.84 & 6.99{e}-03 &  3.71 &{0.02}\\
    320 &    32  & 9.64{e}-03 &  2.76  & 1.86{e}-03 &   3.18 & 5.28{e}-04 &  3.73 &{0.04}\\
    640 &    64  & 3.53{e}-04 &  4.77  & 6.58{e}-05 &   4.82 & 1.54{e}-05 &  5.10 &{0.10}\\
   1280 &   128  & 1.46{e}-05 &  4.60  & 2.69{e}-06 &   4.61 & 6.31{e}-07 &  4.61 &{0.22}\\
   2560 &   256  & 8.46{e}-07 &  4.11  & 1.57{e}-07 &   4.10 & 3.88{e}-08 &  4.02 &{0.54}\\
   5120 &   512  & 8.6{2}{e}-08 &  3.29  & 1.64{e}-08 &   3.26 & 4.2{5}{e}-09 &  3.19 & { 1.53}\\
  10240 &  1024  &{1.01e-08}&  3.1{0}& 1.{9}6{e}-09 &   3.{06}& {5.21}{e}-10 &  3.{03}&  {5.00}\\
\hline
\end{tabular}
\end{center}
\caption{
\label{tab:MODEL2-BDF3}
(Model 2)
BDF3 scheme for \eqref{eq:1} (using 4th order spatial approximation).}
\end{table}

\appendix

\section{An HJB equation for obstacle problems}
\label{app:proof-of-HJB}
This appendix is devoted to a sketch of proof for the equivalence between PDE \eqref{eq:1} and PDE \eqref{eq:2} {in}
case the coefficients are not time dependent.

In order to simplify the presentation we assume that $f\equiv 0$ and $\mO\equiv \R$.
We consider the problem \eqref{eq:1} after a change of variable $t\conv T-t$:
\begin{subequations}\label{eq:4}
\be
  & & \min (-v_t + \cA v,\ v -\varphi(x))=0, \quad t\in(0,T), \quad x \in \mO,\\
  & & v(T,x)=\varphi(x), \quad x\in \mO
\ee
\end{subequations}
We aim to prove that $v$ is also a viscosity solution of \eqref{eq:2}.
{In the following, we will first prove that $(i)$ $-v_t + \cA v\geq 0$, then $(ii)$ that $-v_t\geq 0$, then $(iii)$ that $\min(-v_t + \cA v, -v_t) =0$
and will $(iv)$ conclude by a uniqueness argument.}

$(i)$ By uniqueness of the continuous solutions of \eqref{eq:1},
$v$ is also given by the expectation formula
$$
  v(t,x) = \sup_{\tau\in \cT_{[t,T]}} \E ( e^{-\int_t^\tau r\ds } \varphi(X^{t,x}_\tau) | \cF_t ).
$$
(see for instance \cite{lam-lap-2008})
where we have considered a probability space $(\mO,\cF,\P)$, a filtration $(\cF_t)_{t\geq 0}$,
$\cT_{[t,T]}$ is the set of stopping times taking values a.s.\ in $[t,T]$,
$X_\tau:=X^{t,x}_\tau$ is the strong solution of the stochastic  differential equation (SDE):
$$ \dop X_s = b(X_s) \ds +  \ms(X_s) \dop W_s, \quad s\geq t,$$
with $X_t =x$,
$W_s$ denotes an $\cF_t$-adapted Brownian motion on $\R$,
and the \enquote{sup} is an essential supremum over $\cT_{[t,T]}$.
First one can use the semi-Martingale property
$$
  v(t,x)\leq \E ( e^{-r h}\, v(t+h, X^{t,x}_{t+h}) | \cF_t),
$$
in order to deduce (in the viscosity sense), that
$$ - v_t  + \cA v  \geq 0.$$

$(ii)$ Then we aim to show that $v(t,x) \geq v(t+h,x)$, for any $h>0$. This will imply
$-v_t \geq 0$ (in the viscosity sense).
By definition,
\be
  v(t+h,x) & = &  \sup_{\tau\in \cT_{[t+h,T]}} \E ( e^{-\int_{t+h}^\tau r\, \ds} \varphi(X^{t+h,x}_\tau) | \cF_{t+h} )\\
           & = &  \sup_{\tau\in \cT_{[t,T-h]}} \E ( e^{-\int_{t+h}^{\tau+h} r\, \ds} \varphi(X^{t+h,x}_{\tau+h}) | \cF_{t+h} )\\
           & = &  \sup_{\tau\in \cT_{[t,T-h]}} \E ( e^{-\int_{t}^\tau r\, \ds} \varphi(X^{t,x}_{\tau}) | \cF_{t} ).
\ee
We have used the fact that the process $X^{t,x}$ satisfies an SDE with no time dependency in the coefficients,
and also, since $\tau\in \cT_{[t,T-h]}$, $X_{\tau+h}$ a.s.\  stops before time $T$,
the fact that  $\E(X^{t+h,x}_{\tau+h}|\cF_{t+h}) =\E( X^{t,x}_{\tau} | \cF_t)$ -
which corresponds to an averaging during a period
of time $T-(t+h)$.
Then, in particular,
\be
  v(t+h,x) & \leq &  \sup_{\tau\in \cT_{[t,T]}} \E ( e^{-\int_{t}^\tau r\, \ds} \varphi(X^{t,x}_{\tau}) | \cF_{t} )
    = v(t,x).
\ee

At this point we therefore have shown that
$$ \min(-v_t +\cA v, -v_t) \geq 0.$$

$(iii)$ Let us assume that $-v_t(t,x)>0$ (in the viscosity sense), and $t<T$.
It implies that $v(t,x)>v(t+h,x)$ for all $h>0$ small enough.
Because $v(t,x)>v(t+h,x)\geq \varphi(x)$, we have $v(t,x)> \varphi(x)$.
The following dynamic programming principle holds:
$$
  v(t,x)= \E ( e^{-\int_{t}^{\tau^*_{t,x}} r\,\ds} \varphi(X^{t,x}_{\tau^*_{t,x}}) | \cF_t )
        =  \E ( e^{-\int_{t}^{\tau^*_{t,x}} r\,\ds}  v(\tau^*_{t,x} , X^{t,x}_{\tau^*_{t,x}}) | \cF_t )
$$
where $\tau^{*}_{t,x}$ is the optimal stopping time for the obstacle problem, defined by
$$
  \tau^{*}_{t,x} = \inf \bigg\{ \mt \geq t,\ v(\mt,X^{t,x}_\mt) = \varphi(X^{t,x}_\mt) \bigg\}.
$$
It can be shown that $\tau^{*}_{t,x} >t$ a.s.\ (since $v(t,x)>\varphi(x)$, these functions being continuous).
{Let us show that $-v_t + \cA v = 0$ at $(t,x)$ in the viscosity sense}.
By using Ito's formula between $t$ and $\tau^*_{t,x}$, and from the dynamic programming principle,
we deduce that
\beno
   0 =  \E \bigg(\int_t^{\tau^*_{t,x}} e^{-\int_t^{\mt} r\,\ds} (v_t - \cA v)_{(\mt,X^{t,x}_\mt)}\dop\mt\ |\ \cF_t \bigg).
\eeno
We already have proved that $v_t - \cA v \leq 0$ a.s., so we deduce that
$(v_t - \cA v)(\mt,x) = 0$ a.e.\ for $\mt\in(t,\tau^*_{t,x})$. For some random parameter $w$ we have $t^*:=\tau^*_{t,x}(w)>t$,
from which it is deduced that $(v_t - \cA v)(t,x) = 0$.
Therefore we have proved in this case that $\min(-v_t(t,x) + \cA v(t,x),\ -v_t)=0$.

$(iv)$ Conversely, we can use a uniqueness argument for the solutions of \eqref{eq:2} in order to conclude the equivalence between
\eqref{eq:1} and \eqref{eq:2}.

\begin{rem}\label{rem:equivalence-with-nonzero-f}
{\em
In the same way, it can be proved that the following PDE with source term and $x$-dependent coefficients in the operator $\cA$:
\begin{subequations}\label{eq:PDEsource}
\be
  & & \min (-u_t + \cA u, u -\varphi(x))=f(x), \quad t\in(0,T), \quad x \in \mO,\\
  & & u(T,x)=\varphi(x)+f(x), \quad x\in \mO,
\ee
\end{subequations}
is equivalent to the following Hamilton-Jacobi-Bellman equation
\begin{subequations}\label{eq:PDEsource.ut}
\be
  & & -u_t + \min(\cA u,\ 0) = f(x), \quad t\in(0,T), \quad x \in \mO,\\
  & & u(T,x)=\varphi(x)+f(x), \quad x\in \mO.
\ee
\end{subequations}
Problem \eqref{eq:PDEsource} is associated with the following stopping time problem
\be\label{eq:stop-time-pb-phi-f}
  u(t,x) = \sup_{\tau\in \cT_{[t,T]}} \E \bigg( e^{-\int_t^\tau r \ds } (\varphi(X^{t,x}_\tau)+f(X^{t,x}_\tau))
     + \int_t^\tau e^{-\int_t^\mt r \ds} f(X^{t,x}_\mt)\dop\mt   | \cF_t \bigg).
\ee
}
{The Hamilton-Jacobi equation~\eqref{eq:PDEsource.ut} (or \eqref{eq:2})
admits also a representation formula corresponding to a stochastic optimal control problem with controlled diffusion, drift and rate term
$(\mt\ms(t,x),\,\mt b(t,x),\,\mt r(t))$ with $\mt\in[0,1]$,
see for instance~\cite{Lions_1983a}.}
\end{rem}

\def\cprime{$'$}

\end{document}